\documentclass[12pt]{amsart}

\usepackage{graphicx} 
\usepackage{scrextend}
\usepackage{amsfonts, amsmath, amssymb,amsthm,comment}  
\usepackage{times,enumerate}
\usepackage{mathdots}%
\usepackage{nccmath}
\usepackage{mathtools}
\usepackage[shortlabels]{enumitem}
\usepackage{multicol}
\setlength{\columnsep}{-3cm}
\newenvironment{mpmatrix}{\begin{medsize}\begin{pmatrix}}%
    {\end{pmatrix}\end{medsize}}%
\usepackage{dsfont,bbm}
\usepackage{rotating}
\usepackage{lscape}
\usepackage{url}
\usepackage{multicol}
\usepackage{thmtools, thm-restate}
\usepackage{blkarray}
\usepackage{multicol}
\usepackage[margin=2.5cm]{geometry}
 \usepackage[foot]{amsaddr}
 \usepackage{hyperref}
\usepackage{cancel}

\usepackage[usenames, dvipsnames]{xcolor}
\DeclareGraphicsExtensions{.pdf,.png,.jpg}

\hypersetup{
    colorlinks,
    linkcolor={blue},
    citecolor={blue},
    urlcolor={blue}
}

\DeclareMathOperator{\Span}{span}

\DeclareMathOperator{\Ran}{Ran}
\DeclareMathOperator{\Rank}{rank}

\DeclareMathOperator{\Card}{card}

\DeclareMathOperator{\supp}{supp}
\DeclareMathOperator{\rev}{rev}
\DeclareMathOperator{\dom}{dom}

\makeatletter
\def\widebreve{\mathpalette\wide@breve}
\def\wide@breve#1#2{\sbox\z@{$#1#2$}%
     \mathop{\vbox{\m@th\ialign{##\crcr
\kern0.08em\brevefill#1{0.8\wd\z@}\crcr\noalign{\nointerlineskip}%
                    $\hss#1#2\hss$\crcr}}}\limits}
\def\brevefill#1#2{$\m@th\sbox\tw@{$#1($}%
  \hss\resizebox{#2}{\wd\tw@}{\rotatebox[origin=c]{90}{\upshape(}}\hss$}
\makeatletter

\newcommand{\RR}{\mathbb R}

\newcommand{\NN}{\mathbb N}
\newcommand{\ZZ}{\mathbb Z}
\newcommand{\CC}{\mathbb C}

\newcommand{\Mod}[1]{\ (\mathrm{mod}\ #1)}

\newcommand{\cB}{\mathcal B}

\newcommand{\cH}{\mathcal H}

\newcommand{\cZ}{\mathcal Z}

\newcommand{\cC}{\mathcal C}

\newcommand{\benu}{\begin{enumerate}}
\newcommand{\eenu}{\end{enumerate}}
\newcommand{\bop}{\begin{opomba}}
\newcommand{\eop}{\end{opomba}}

\newtheorem{theorem}{Theorem}[section]
\newtheorem{corollary}[theorem]{Corollary}

\newtheorem{proposition}[theorem]{Proposition}

\theoremstyle{definition}

\newcommand{\mc}{\mathcal}

\newcommand{\mbf}{\mathbf}

\theoremstyle{remark}
\newtheorem{remark}[theorem]{Remark}

\numberwithin{equation}{section}


\begin{document}

\title{The strong truncated Hamburger moment problem with and without gaps}

\author[Alja\v z Zalar]{Alja\v z Zalar}

\address{%
Faculty of Computer and Information Science\\
University of Ljubljana\\
Ve\v cna pot 113\\
1000 Ljubljana\\
Slovenia}

\email{aljaz.zalar@fri.uni-lj.si}

\thanks{Supported by the Slovenian Research Agency grants J1-2453, P1-0288.}

\subjclass[2010]{Primary 47A57, 47A20, 44A60; Secondary 
15A04, 47N40.}

\keywords{strong Hamburger moment problem, truncated moment problems, representing measure, moment matrix}
\date\today
\maketitle

\begin{abstract}
	The strong truncated Hamburger moment problem (STHMP) of degree $(-2k_1,2k_2)$ asks to find necessary 
	and sufficient conditions for the existence of a positive Borel measure, supported on $\RR$, such 
	that $\beta_i=\int x^id\mu\; (-2k_1\leq i\leq 2k_2)$. The first solution of the STHMP, covering also its
	matrix generalization, was established by Simonov \cite{Sim06}, who used the operator approach and 
	described all solutions in terms of self-adjoint extensions of a certain symmetric operator.
	Using the solution of the truncated Hamburger moment
	problem and the properties of Hankel matrices we give an alternative solution of the STHMP
	and describe concretely all minimal solutions, i.e., solutions having the smallest support.
	Then, using the equivalence with the STHMP of degree $(-2k,2k)$, 
	we obtain the solution of the 
	2--dimensional truncated moment problem (TMP) of degree $2k$  with variety $xy=1$, 
	first solved by Curto and Fialkow \cite{CF05}. Our addition to their result is the fact previously known only for 		
	$k=2$, that the existence of 
	a measure is equivalent to the existence of a flat extension of the moment matrix.
	Further on, we solve the STHMP of degree $(-2k_1,2k_2)$ with one missing moment in the sequence, 
	i.e., $\beta_{-2k_1+1}$ or $\beta_{2k_2-1}$, which also gives
	the solution of the TMP with variety $x^2y=1$ as a special case, first studied by Fialkow in \cite{Fia11}.
\end{abstract}

%




\section{Introduction}

Given a real sequence $\beta^{ (-2k_1,2k_2)}=(\beta_{-2k_1},\beta_{-2k_1+1},\ldots,\beta_{2k_2-1},\beta_{2k_2})$ of degree $(-2k_1,2k_2)$, $k_1,k_2\in \ZZ_+$,
the \textbf{strong truncated Hamburger moment problem (STHMP)} for $\beta^{ (-2k_1,2k_2)}$
asks to characterize the existence of a positive Borel measure $\mu$ on $\RR$, such that
	\begin{equation}\label{moment-measure-cond}
		\beta_i=\int_{\RR}x^i d\mu\quad  (i\in \ZZ,\; -2k_1\leq i\leq 2k_2).
	\end{equation}
The STHMP of degree $(0,2k)$ is the usual \textbf{truncated Hamburger moment problem (THMP)} of degree $2k$.

We denote by $M(n_1,n_2)=M(n_1,n_2)(\beta^{ (-2k_1,2k_2)})=(\beta_{i+j})_{i,j=n_1}^{n_2}$, $-k_1\leq n_1\leq n_2\leq k_2$ 
the moment matrix associated with $\beta^{ (-2k_1,2k_2)}$, 
where the rows and columns are indexed by monomials $X^i$ in the degree increasing order 
	$$X^{n_1},X^{n_1+1},\ldots,X^{-1},1,X,\ldots,X^{n_2-1},X^{n_2}.$$
Let
	$\RR[x^{-1},x]_{r_1,r_2}=
		\left\{ \sum_{i=-r_1}^{r_2} a_i x^i \colon a_i \in \RR,\; r_1,r_2\in \ZZ_+\right\}$
stand for the set of Laurent polynomials in variables $x^{-1},x$ of degree at most $r_1$ in $x^{-1}$ and at most 
$r_2$ in $x$.
For every Laurent polynomial 
$p(x^{-1},x)=\sum_{i=-k_1}^{k_2} a_i x^i\in\RR[x^{-1},x]_{k_1,k_2}$,
we denote by
	$$p(X^{-1},X)=\sum_{i=-k_1}^{-1}a_i(X^{-1})^i+a_0 1+ \sum_{j=1}^{k_2} a_jX^j\in 
		\cC_{M(-k_1,k_2)}$$
the vector from the column space $\cC_{M(-k_1,k_2)}$ of the moment matrix $M(-k_1,k_2)$. 
Let $\mbf{0}$ stand for the zero vector.
We say that the matrix $M(-k_1,k_2)$ is \textbf{recursively generated (rg)} if for $p,q,pq\in \RR[x^{-1},x]_{k_1,k_2}$ such that $p(X^{-1},X)=\mbf{0}$, 
it follows that $(pq)(X,X^{-1})=\mbf{0}.$ 

Given a real $2$--dimensional sequence
	$$\beta^{ (2k)}=\{\beta_{0,0},\beta_{1,0},\beta_{0,1},\ldots,\beta_{2k,0},\beta_{2k-1,1},\ldots,
		\beta_{1,2k-1},\beta_{0,2k}\}$$ of degree $2k$
and a closed subset $K$ of $\RR^2$, the \textbf{truncated moment problem (TMP)} supported on $K$ for $\beta^{ (2k)}$
asks to characterize the existence of a positive Borel measure $\mu$ on $\RR^2$ with support in $K$, such that
	\begin{equation}\label{moment-measure-cond}
		\beta_{i,j}=\int_{K}x^iy^j d\mu\quad (i,j\in \ZZ_+,\;0\leq i+j\leq 2k).
	\end{equation}
If such a measure exists, we say that $\beta^{ (2k)}$ has a representing measure supported on $K$ and $\mu$ is its $K$--\textbf{representing measure}.

We denote by $M(k)=M(k)(\beta^{ (2k)})=(\beta_{i,j})_{i,j=0}^{2k}$ the moment matrix associated with $\beta^{ (2k)}$, where the rows and columns are indexed 
in the degree lexicographic order 
	$$1,X,Y,\ldots, X^{2k},X^{2k-1}Y,\ldots, XY^{2k-1},Y^{2k}.$$
Let 
$\RR[x,y]_k$
stand for the set of polynomials in variables $x,y$ of degree at most $k$.
For every $p(x,y)=\sum_{i,j} a_{ij}x^iy^j\in \RR[x,y]_k$, we denote by $p(X,Y)=\sum_{i,j} a_{ij}X^iY^j$ the vector from the column
space $\cC_{M(k)}$ of the matrix $M(k)$. 
Recall from \cite{CF96}, that $\beta$ has a representing measure $\mu$ with the support $\supp \mu$ being a 
subset of 
$\cZ_p:=\{(x,y)\in \RR^2\colon p(x,y)=0\}$ if and only if $p(X,Y)=\mbf{0}$ where $\mbf{0}$ stands for the zero vector. 
We say that the matrix $M(k)$ is \textbf{recursively generated (rg)} if for $p,q,pq\in \RR[x,y]_k$ such that $p(X,Y)=\mbf{0}$, 
it follows that $(pq)(X,Y)=\mbf{0}.$
The \textbf{variety} of $\beta^{(2k)}$ is defined by 
$$\mathcal V(\beta^{(2k)}):=\bigcap_{
				\substack{g\in \RR[X,Y]_{\leq k},\\ 
							g(X,Y)=\mbf 0}} \mathcal Z_g,$$
where	$\mathcal Z_g:=\left\{ (x,y)\in \RR^2\colon g(x,y)=0 \right\}$.

A \textbf{concrete solution} to the TMP is a set of necessary and sufficient conditions for the existence of a $K$--representing measure, that can be tested in 
numerical examples. 
Among necessary conditions, $M(k)$ must be positive semidefinite (psd), rg and satisfies the \textbf{variety condition} \cite[Proposition 3.1 and Corollary 3.7]{CF96}, which
states that the inequality $\Rank M(k)\leq \Card \mathcal V(\beta^{(2k)})$ holds.
The celebrated \textbf{flat extension theorem} of Curto and Fialkow \cite[Theorem 7.10]{CF96}, \cite[Theorem 2.19]{CF05b} states that $\beta^{(2k)}$ admits a $\Rank M(k)$--atomic representing measure if and only if $M(k)$ is psd and admits a rank-preserving extension to a moment matrix $M(k+1)$. 
Using the flat extension theorem as the main tool the 2--dimensional TMP has been concretely solved in
the following cases: 
	$K$ is the variety defined by a polynomial $p(x,y)=0$ with $\deg p\leq 2$ 
	\cite{CF02, CF04, CF05,Fia14}, 
	$K=\RR^2$, $k=2$ and $M(2)$ is invertible \cite{CS16,FN10}, 
	$K$ is the variety $y=x^3$ \cite{Fia11},
 	$M(k)$ has a special feature called \textit{recursive determinateness} \cite{CF13}
	and in the \textit{extremal case} with the equality in the variety condition \cite{CFM08}.
Some other special cases have been solved in \cite{Ble15,BF20,CS15,Fia17,Kim14}. 
In \cite{Fia11}, Fialkow studied also the TMP for the curves of the form $y=g(x)$ and $yg(x)=1$, where $g\in \RR[x]$ is a polynomial, and obtained the
bound on the degree $m$ for which the existence of a positive extension $M(m)$ of $M(k)$ is equivalent to the existence of a measure.
In our previous work we derived some of the above results and solved new cases of the 2--dimensional TMP
using the solution of the THMP or the THMP with some missing moments: $K$ with variety $xy=0$ can
be solved with the use of the THMP twice \cite[Section 6]{BZ+}, $K$ with variety $y=x^3$ or
$y^2=x^3$ are equivalent to the THMP of degree $6k$ with a missing moment $\beta_{6k-1}$ or $\beta_1$ \cite[Subsections 3.1, 4.1]{Zal+}, while special cases of $K$ with variety $y=x^4$ or $y^3=x^4$ 
to the THMP of degree $8k$ without $\beta_{8k-2}$ and $\beta_{8k-1}$ or $\beta_{1}$ and $\beta_{2}$ 
\cite[Subsections 3.2, 4.2]{Zal+}.

By \cite{Sto01} the TMP is more general than the classical full moment problem (MP). For nice expositions on the full MP and the TMP see \cite{Akh65,KN77,Sch17}. 
Haviland's solution \cite{Hav35} of the MP established the duality of the MP with positive polynomials and
led to further investigations of the MP from the perspective of real algebraic geometry 
(see \cite{KP+,KM02,Las09,Lau05,Lau09,Mar08,Nie14,PS01,Put93,PV99,Sch91}). 
Further on, various generalizations of the TMP and MP have been introduced, e.g., matrix and operator MPs
\cite{AV03,BW11,CZ12,HKM12,HM04,KW13,Kre49,McC01,Vas03}, 
tracial MPs 
\cite{BZ18,BZ+,BCKP13,BK10,BK12},  
MP supported on $\NN_0$ \cite{IKLS17}, MPs in infinitely many variables and on more general commutative algebras 
\cite{AJK15,CGIK+,DS18,GKM16}, 
TMP with a signed representing measure \cite{Kim+}.

In this article we first give an alternative solution to the STHMP of degree $(2k_1, 2k_2)$,
which was first solved in the more general matrix case in \cite{Sim06} using the operator approach,
describing all solutions in terms of self-adjoint extensions of a certain symmetric operator.
Our approach uses the solution of the THMP and the properties of Hankel matrices,
giving also a concrete description of all minimal solutions, i.e., solutions having the smallest support.
As a corollary we obtain a new proof of the TMP of degree $2k$ with variety $xy=1$, solved in \cite{CF05}. 
In addition, it follows that the existence of a flat extension of the moment matrix is equivalent to
the existence of a measure; for $k=2$ this was first proved in \cite[Proposition 5.3]{CF02}.
Then we solve the STHMP of degree $(-2k_1,2k_2)$ with the missing moment $\beta_{-2k_1+1}$ or $\beta_{2k_2-1}$ by using the solutions of the THMP of degree $2k$ with the missing moment $\beta_{1}$ or
$\beta_{2k-1}$ from \cite{Zal+}. Finally, as a corollary to this we obtain the solution of the TMP with variety $x^2y=1$.


\subsection{ Reader's Guide}
The paper is organized as follows. 
In Section \ref{S2} we present some properties of psd Hankel matrices 
and the solution of the THMP. 
In Section \ref{S3} we first state the solution of the STHMP (see Theorem \ref{strongHamburger-general}), give a proof
based on the solution of the THMP in Subsection \ref{sub-sec-3-1}, explain the connection with Simonov's approach \cite{Sim06} in Subsection \ref{140222-2346},
and finally as a corollary
obtain the solution of the nondegenerate hyperbolic TMP (see Corollary \ref{posledica-12:51}).
Finally, in Section \ref{S4} we present the solutions of the STHMP of degree $(-2k_1,2k_2)$ with
the missing moment $\beta_{-2k_1+1}$  (see Theorem \ref{strong-trunc-Hamb-without-(-2k+1)}) 
or $\beta_{-2k_2-1}$  (see Corollary \ref{strong-trunc-Hamb-without-(2k-1)}) and as a consequence solve
 the TMP for the variety $x^2y=1$ (see Corollary \ref{X2Y=-general}).


\noindent \textbf{Acknowledgement}.  
I would like to thank the anonymous referee for very useful comments for the improvement of
the manuscript and bringing the paper \cite{Sim06} to my attention.


\section{Preliminaries}\label{S2}
We write $M_{n,m}$ (resp.\ $M_n$) for the set of $n\times m$ (resp.\ $n\times n$) real matrices. 
For a matrix $M$ we denote by $\cC_M$ its column space.
The set of real symmetric matrices of size $n$ will be denoted by $S_n$. 
For a matrix $A\in S_n$ the notation $A\succ 0$ (resp.\ $A\succeq 0$) means $A$ is positive definite (pd) (resp.\ positive semidefinite (psd)).


Let $k\in \NN$.
For 
	\begin{equation*}\label{vector-v}
		v=(v_0,\ldots,v_{2k} )\in \RR^{2k+1},
	\end{equation*}
we denote by
	\begin{equation*}\label{vector-v}
		A_{v}:=\left(v_{i+j} \right)_{i,j=0}^k
					=\left(\begin{array}{ccccc} 
							v_0 & v_1 & v_2 & \cdots & v_k\\
							v_1 & v_2 & \iddots & \iddots & v_{k+1}\\
							v_2 & \iddots & \iddots & \iddots & \vdots\\
							\vdots 	& \iddots & \iddots & \iddots & v_{2k-1}\\
							v_k & v_{k+1} & \cdots & v_{2k-1} & v_{2k}
						\end{array}\right)
					\in S_{k+1}
	\end{equation*}
the corresponding Hankel matrix. We denote by 
	$\mbf{v_j}:=\left( v_{j+\ell} \right)_{\ell=0}^k$ the $(j+1)$--th column of $A_{v}$, $0\leq j\leq k$, i.e.,
		$$A_{v}=\left(\begin{array}{ccc} 
								\mbf{v_0} & \cdots & \mbf{v_k}
							\end{array}\right).$$
As in \cite{CF91}, the \textbf{rank} of $v$, denoted by $\Rank v$, is defined by
	$$\Rank v=
	\left\{\begin{array}{rl} 
		k+1,&	\text{if } A_{v} \text{ is nonsingular},\\
		\min\left\{i\colon \bf{v_i}\in \Span\{\bf{v_0},\ldots,\bf{v_{i-1}}\}\right\},&		\text{if } A_{v} \text{ is singular}.
	 \end{array}\right.$$
If $\Rank v<k+1$ we say that $v$ is \textbf{singular}. Else $v$ is \textbf{nonsingular}.

We denote
\begin{itemize} 
	\item the upper left-hand corner $\left(v_{i+j} \right)_{i,j=0}^m\in S_{m+1}$ of $A_{v}$ of size $m+1$ by 
		$A_{v}(m)$.
	\item the lower right-hand corner $\left(v_{i+j} \right)_{i,j=k-m}^k\in S_{m+1}$ of $A_{v}$ of size $m+1$ by
		$A_{v}[m]$.
\end{itemize}

For a sequence $v=(v_0,\ldots,v_{2k})$ we denote by $v^{(\rev)}:=(v_{2k},v_{2k-1},\ldots,v_0)$ the \textbf{reversed sequence}. 
A sequence $v$ is called
\begin{itemize}
	\item \textbf{positively recursively generated (prg)} if for  $r=\Rank v$ the following two conditions hold:
		\begin{itemize}
			\item $A_v(r-1)\succ 0$.
			\item If $r<k+1$, denoting 
				\begin{equation}\label{rg-coefficients-2809-2000}
					(\varphi_0,\ldots,\varphi_{r-1}):=A_{v}(r-1)^{-1}(v_r,\ldots,v_{2r-1})^{T},
				\end{equation}
				the equality
				\begin{equation}\label{recursive-generation}
					  v_j=\varphi_0v_{j-r}+\cdots+\varphi_{r-1}v_{j-1}
				\end{equation}
				holds for $j=r,\ldots,2k$.
		 \end{itemize}
	\item \textbf{negatively recursively generated (nrg)}  if for $r=\Rank v^{(\rev)}$ the following two conditions hold:
		\begin{itemize}
			\item $A_v[r-1]\succ 0$.
			\item If $r<k+1$, denoting
					$$(\psi_0,\ldots,\psi_{r-1}):=A_{v}[r-1]^{-1}(v_{2k-2r+1},\ldots,v_{2k-r})^{T},$$
				the equality
				\begin{equation}\label{negative-recursive-generation}
			  		v_{2k-r-j}=\psi_0v_{2k-r+1-j}+\cdots+\psi_{r-1}v_{2k-j},
				\end{equation}
				holds for $j=0,\ldots,2k-r$.
		\end{itemize}
	\item \textbf{recursively generated (rg)} if it is prg and nrg,
\end{itemize}

\begin{proposition} \label{prg-2809-1955}
   Let $v=(v_0,\ldots,v_{2k})\in \RR^{2k+1}$, $v_0>0$, be a singular sequence of rank $r\leq k$ such that $A_v\succeq 0$. Let
   $\varphi_i$ be defined by \eqref{rg-coefficients-2809-2000}. Then the following statements are true:
	\begin{enumerate}[(1)]
		\item\label{pt0-prg-2809-2306}  \eqref{recursive-generation} holds for $j=r,\ldots,2k-1.$
		\item\label{pt0-prg-0110-1106}  \eqref{negative-recursive-generation} holds for $j=0,\ldots,2k-r-1.$		
		\item\label{pt-1-prg-2809-2310} The polynomial $\displaystyle p(x):=x^r-\sum_{i=0}^{r-1}\varphi_i x^i$ has $r$ distinct real zeroes.
		\item\label{pt1-prg-2809-1956} The following statements are equivalent:
			\begin{enumerate}
				\item $v$ is prg.
				\item \eqref{recursive-generation} holds for $j=2k$.
				\item $\Rank A_v(k-1)=\Rank A_v$.
				\item There exist real numbers $v_{2k+1}$ and $v_{2k+2}$ such that 
					$A_{\tilde v}\succeq 0$, where $\tilde v:=(v,v_{2k+1},v_{2k+2})$.
				\item $v_{2k+1}$ and $v_{2k+2}$ defined by \eqref{recursive-generation} for $j=2k+1, 2k+2$ are the unique real numbers such that 
					$A_{\tilde v}\succeq 0$ and $\Rank A_v=\Rank A_{\tilde v}$,
					where $\tilde v:=(v,v_{2k+1},v_{2k+2})$.
			\end{enumerate}
		\item\label{pt2-nrg-2809-1956} The following statements are equivalent:
			\begin{enumerate}
				\item $v$ is nrg.
				\item \eqref{negative-recursive-generation} holds for $j=2k-r+1$.
				\item $\Rank A_v[k-1]=\Rank A_v$.
				\item There exist real numbers $v_{-1}$ and $v_{-2}$ such that 
					$A_{\tilde v}\succeq 0$, where $\tilde v:=(v_{-2},v_{-1},v)$.
				\item $v_{-2}$ and $v_{-1}$ defined by \eqref{negative-recursive-generation} for $j=2k-r+1, 2k-r+2$ are the unique real numbers such that 
					$A_{\tilde v}\succeq 0$ and $\Rank A_v=\Rank A_{\tilde v}$,
					where $\tilde v:=(v_{-2},v_{-1},v)$.
			\end{enumerate}
	\end{enumerate}
\end{proposition}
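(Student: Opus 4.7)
The plan is to build the five parts in order, with \eqref{pt0-prg-2809-2306} doing the heaviest lifting. Since $\Rank v=r$ and $A_v\succeq 0$, the columns $\mbf{v_0},\ldots,\mbf{v_{r-1}}$ are linearly independent and the leading $r\times r$ principal submatrix $A_v(r-1)$ is positive definite (writing $A_v=B^TB$ and noting that the linear independence of the first $r$ columns of $A_v$ forces the same for the first $r$ columns of $B$). Reading the first $r$ coordinates of the column relation $\mbf{v_r}=\sum_{i=0}^{r-1}\varphi_i\mbf{v_i}$ against the columns of $A_v(r-1)$ yields the formula \eqref{rg-coefficients-2809-2000}, and reading all $k+1$ coordinates at once gives \eqref{recursive-generation} for $j=r,\ldots,r+k$. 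For the remaining range $r+k+1\le j\le 2k-1$ (nonempty only when $r\le k-2$), I introduce the shifted vectors $u_s:=(0,\ldots,0,\varphi_0,\ldots,\varphi_{r-1},-1,0,\ldots,0)^T\in\RR^{k+1}$ with the $-1$ at position $r+s$, for $s=1,\ldots,k-r-1$. An induction on $s$, using the recursion already established at the previous step, shows $A_v u_s=\delta_s e_k$, where $\delta_s$ equals the recursion deficit at $j=r+k+s$ and $e_k$ is the last standard basis vector. Since $s+r<k$, the entry $(u_s)_k$ vanishes and $u_s^T A_v u_s=0$; applying positive semidefiniteness to the trial vector $u_s+t e_k$ gives
\[
0\le (u_s+t e_k)^T A_v (u_s+t e_k) = 2t\delta_s + t^2 v_{2k}\quad\text{for all }t\in\RR,
\]
which, together with $v_{2k}\ge 0$, forces $\delta_s=0$. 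This completes \eqref{pt0-prg-2809-2306}, and \eqref{pt0-prg-0110-1106} follows by applying the same argument to the reversed sequence $v^{(\rev)}$, whose Hankel matrix $JA_vJ$ (with $J$ the exchange matrix) is psd of the same rank.

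For \eqref{pt-1-prg-2809-2310}, the principal submatrix $A_v(r)$ is psd of rank exactly $r$: $A_v(r-1)\succ 0$ gives linear independence of its first $r$ columns, while the column relation restricted to the first $r+1$ rows places its $(r+1)$-st column in their span. Its one-dimensional kernel is spanned by the coefficient vector $(-\varphi_0,\ldots,-\varphi_{r-1},1)^T$ of $p(x)$. Using \eqref{recursive-generation} at $j=r,\ldots,2r-1$ (available from \eqref{pt0-prg-2809-2306} since $2r-1\le 2k-1$), a direct computation gives $\langle p,x^m\rangle_v=0$ for $m=0,\ldots,r-1$, where $\langle x^i,x^j\rangle_v:=v_{i+j}$. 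Thus $p$ is the monic polynomial of degree $r$ orthogonal to $\RR[x]_{r-1}$ with respect to the positive definite form $A_v(r-1)$. The classical sign-change argument then yields $r$ distinct real zeros: if $p$ changed sign at only $s<r$ real points $x_1,\ldots,x_s$, then $q(x)=\prod_{i=1}^{s}(x-x_i)$ satisfies $pq\ge 0$ on $\RR$ with $\deg(pq)\le 2r-2$ (the odd-degree case being impossible by sign considerations); writing $pq$ as a sum of two squares of polynomials in $\RR[x]_{r-1}$ and invoking $A_v(r-1)\succ 0$ gives $\langle p,q\rangle_v>0$, contradicting orthogonality.

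Finally, for \eqref{pt1-prg-2809-1956} I would prove a cycle. The equivalence $(a)\Leftrightarrow(b)$ reduces by \eqref{pt0-prg-2809-2306} to the single equation at $j=2k$. For $(b)\Leftrightarrow(c)$: \eqref{pt0-prg-2809-2306} forces $\Rank A_v(k-1)=r$, so (c) reads $\Rank A_v=r$; on the other hand, $u_0,\ldots,u_{k-r-1}$ are $k-r$ independent kernel vectors of $A_v$, and $u_{k-r}$ joins them precisely when $\delta_{k-r}=0$, which is \eqref{recursive-generation} at $j=2k$. For $(b)\Rightarrow(e)$, combining \eqref{pt-1-prg-2809-2310} with the Gauss quadrature construction at the roots of $p$ produces positive weights $\rho_1,\ldots,\rho_r$ with $v_j=\sum\rho_i x_i^j$ for $j=0,\ldots,2k$; extending via the same formula (which agrees with \eqref{recursive-generation}) defines $v_{2k+1},v_{2k+2}$ and presents $A_{\tilde v}$ as $VDV^T$ with $V$ Vandermonde of rank $r$ and $D=\mathrm{diag}(\rho_i)$, hence $A_{\tilde v}\succeq 0$ of rank $r$, with uniqueness forced by $A_{\tilde v}(r-1)=A_v(r-1)\succ 0$. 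The implication $(e)\Rightarrow(d)$ is immediate; for $(d)\Rightarrow(b)$ one applies the same psd-perturbation argument as in \eqref{pt0-prg-2809-2306} to $A_{\tilde v}$ with the extended vector $u:=(-\varphi_0,\ldots,-\varphi_{r-1},1,0,\ldots,0)^T\in\RR^{k+2}$: by \eqref{pt0-prg-2809-2306} the first $k+1$ entries of $A_{\tilde v}u$ vanish, so $A_{\tilde v}u=\delta e_{k+1}$ with $\delta$ the deficit at $j=2k$, and psd applied to $u+t e_{k+1}$ forces $\delta=0$. Claim \eqref{pt2-nrg-2809-1956} is obtained from \eqref{pt1-prg-2809-1956} by the same reversal. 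The main obstacle is the psd-perturbation step in \eqref{pt0-prg-2809-2306}, where the careful choice of trial vector $u_s+t e_k$ converts the residual $\delta_s$ into the coefficient of a one-variable quadratic inequality that must hold on all of $\RR$; once in place, everything else is a routine but careful chain of implications.
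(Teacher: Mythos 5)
Your proposal takes a genuinely different route from the paper: the paper disposes of parts (1), (3) and (4) by citing Curto--Fialkow (Theorem 2.4(ii), Remark 3.5, Theorem 2.6 and Remark 2.7 of \cite{CF91}) and obtains (2) and (5) by reversal, whereas you reconstruct those results from scratch. Your psd-perturbation scheme for part (1) (the trial vectors $u_s+te_k$ with $(u_s)_k=0$, forcing each deficit $\delta_s=0$ from the one-variable inequality $2t\delta_s+t^2v_{2k}\ge 0$) is correct, as is the orthogonal-polynomial/sign-change argument for part (3) and the Gauss-quadrature construction for $(b)\Rightarrow(e)$. The reversal step for (2) and (5) silently identifies $\Rank v$ with $\Rank v^{(\rev)}$ (which can fail, e.g.\ when $v_{2k}=0$), but the paper's own proof makes the same identification, so this is not a point against you.

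There is one concrete slip, in $(d)\Rightarrow(b)$ of part (4). With your vector $u=(-\varphi_0,\ldots,-\varphi_{r-1},1,0,\ldots,0)^T\in\RR^{k+2}$ (the $1$ in position $r$), the entries of $A_{\tilde v}u$ are the deficits at $j=r,\ldots,r+k+1$; the single surviving entry is the deficit at $j=r+k+1$, \emph{not} at $j=2k$ as you claim. These coincide only when $r=k-1$. For $r\le k-2$ your perturbation kills a deficit you already knew vanished from part (1), and for $r=k$ it addresses $j=2k+1$ rather than $j=2k$ (though in that case $d_{2k}=0$ is automatic from the defining column relation). The repair is short: your perturbation shows that the full column relation $\mbf{\tilde v_r}=\sum_{i=0}^{r-1}\varphi_i\mbf{\tilde v_i}$ holds in $A_{\tilde v}$, hence $\Rank\tilde v=r\le k<k+1$ with the same coefficients $\varphi_i$; now apply your part (1) to the degree-$2(k+1)$ sequence $\tilde v$ to get \eqref{recursive-generation} for $j=r,\ldots,2k+1$, which includes $j=2k$. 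Relatedly, in $(c)\Rightarrow(b)$ you assert that the extra kernel vector forced by $\Rank A_v=r$ must be $u_{k-r}$; this is true but needs the observation that any kernel vector supported on coordinates $0,\ldots,k-1$ already lies in $\Span\{u_0,\ldots,u_{k-r-1}\}$, so the extra one has nonzero last coordinate and, after reduction, exhibits $\mbf{v_k}$ in $\Span\{\mbf{v_0},\ldots,\mbf{v_{r-1}}\}$ with coefficients pinned down by the invertible block $A_v(r-1)$. With these two points filled in, the argument is complete.
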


\begin{proof}
	\ref{pt0-prg-2809-2306} is \cite[Theorem 2.4(ii)]{CF91}. \ref{pt-1-prg-2809-2310} follows from \cite[Remark 3.5]{CF91}.
	\ref{pt1-prg-2809-1956} follows from \cite[Theorem 2.6 and Remark 2.7]{CF91}.
	Using \ref{pt0-prg-2809-2306} (resp.\ \ref{pt1-prg-2809-1956}) for $v^{(\rev)}$ we obtain \ref{pt0-prg-0110-1106} (resp.\ \ref{pt2-nrg-2809-1956}).
\end{proof}

\begin{remark}
\begin{enumerate}
	\item Proposition \ref{prg-2809-1955}.\ref{pt0-prg-2809-2306} implies that for a singular sequence $v$, the numbers $\varphi_i$ could also be defined as
		the unique coefficients such that  $\mbf{v_r}=\varphi_0 \mbf{v_{0}}+\cdots+\varphi_{r-1}\mbf{v_{r-1}}$. Moreover,
		\begin{equation}\label{recursive-generation-equivelant}
		  \mbf{v_j}=\varphi_0 \mbf{v_{j-r}}+\cdots+\varphi_{r-1}\mbf{v_{j-1}} 
		\end{equation}
		holds for $j=r+1,\ldots,k-1$.
	\item Proposition \ref{prg-2809-1955}.\ref{pt1-prg-2809-1956} implies that  $v$ is prg if and only if \eqref{recursive-generation-equivelant} holds also for $j=k$.
	\item Proposition \ref{prg-2809-1955}.\ref{pt0-prg-0110-1106} implies that for a singular sequence $v$, the numbers $\psi_i$ could also be defined as
		the unique coefficients such that  $\mbf{v_{k-r}}=\psi_0 \mbf{v_{k-r+1}}+\cdots+\psi_{r-1}\mbf{v_{k}}$. Moreover,
		\begin{equation}\label{negative-recursive-generation-equivelant}
		  \mbf{v_{k-r-j}}=\psi_0 \mbf{v_{k-r+1-j}}+\cdots+\psi_{r-1}\mbf{v_{k-j}} 
		\end{equation}
		holds for $j=1,\ldots,k-r-1$.
	\item Proposition \ref{prg-2809-1955}.\ref{pt2-nrg-2809-1956}  implies that $v$ is nrg if and only if \eqref{negative-recursive-generation-equivelant} holds also for $j=k-r$.
\end{enumerate}
\end{remark}

Let $v=(v_0,\ldots,v_{2k})\in \RR^{2k+1}$ be a sequence with the Hankel matrix $A_{v}=\left(\begin{array}{ccc} \mbf{v_0} & \cdots & \mbf{v_k}\end{array}\right)$.
For a polynomial $g(x)=\sum_{i=0}^k\gamma_i x^i$, $\gamma_i\in \RR$, we define the \textbf{evaluation} $g(v)$ by the rule
$g(v)=\sum_{i=0}^k\gamma_i \mbf{v}_i.$
For a singular sequence $v$ we call the polynomial $p$ from Proposition \ref{prg-2809-1955}.\ref{pt-1-prg-2809-2310}
the \textbf{generating polynomial} of $v$. 
We write $\mbf{0}\in \RR^{k+1}$ for the zero vector. 
%
%

\begin{proposition} \label{prg-nrg-rg-2809-1419}
   For a singular sequence $v=(v_0,\ldots,v_{2k})\in \RR^{2k+1}$ the following statements are equivalent:
	\begin{enumerate}[(1)]
		\item\label{pt1-prg-nrg-2809-1420} $v$ is prg and $\varphi_0\neq 0$.
		\item\label{pt2-prg-nrg-2809-1420} $v$ is nrg and $\psi_{r-1}\neq 0$.
		\item\label{pt3-prg-nrg-2809-1421} $v$ is rg.
		\item\label{pt4-prg-nrg-2809-1424} $v$ is rg, $\Rank v=\Rank v^{(\rev)}$, $\varphi_0\neq 0$ and
			\begin{equation}\label{psi-varphi-2809-1922}
				(\psi_0,\psi_1,\ldots,\psi_{r-2},\psi_{r-1})=
				\left(-\frac{\varphi_{1}}{\varphi_0},-\frac{\varphi_{2}}{\varphi_0},\ldots,-\frac{\varphi_{r-1}}{\varphi_0},\frac{1}{\varphi_0}\right).
			\end{equation}
	\end{enumerate}
\end{proposition}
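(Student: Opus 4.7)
The plan is to prove the cycle $(1)\Rightarrow(4)\Rightarrow(3)\Rightarrow(1)$, and then to deduce $(2)\Leftrightarrow(3)$ by applying the resulting equivalence to the reversed sequence~$v^{(\rev)}$. The engine throughout is that a singular prg sequence $v$ admits a unique $r$-atomic representing measure $\mu=\sum_{\ell=1}^r c_\ell\delta_{x_\ell}$ with $c_\ell>0$, whose atoms are the $r$ distinct real zeros of the generating polynomial $p$; this is a standard consequence of Proposition~\ref{prg-2809-1955} together with the flat-extension theorem for Hankel matrices. The hypothesis $\varphi_0\neq 0$ is exactly $p(0)\neq 0$, i.e.\ that no atom equals zero, which is precisely what lets one push $\mu$ forward through $x\mapsto 1/x$ and thereby extend the moment sequence through the origin.

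For $(1)\Rightarrow(4)$, assuming $v$ prg with $\varphi_0\neq 0$, I would first define $\tilde\mu:=\sum_\ell c_\ell x_\ell^{2k}\delta_{1/x_\ell}$; a direct computation yields $\int x^i\,d\tilde\mu=v_{2k-i}=v^{(\rev)}_i$, so $\tilde\mu$ is an $r$-atomic representing measure for $v^{(\rev)}$ with $r$ distinct nonzero atoms. Writing $A_v=VCV^T$, where $V\in\RR^{(k+1)\times r}$ has columns $(1,x_\ell,\ldots,x_\ell^k)^T$ and $C=\mathrm{diag}(c_\ell)$, and using the analog for $A_{v^{(\rev)}}$, I would read off $\Rank v=\Rank v^{(\rev)}=r$ as well as $A_v(r-1)\succ 0$ and $A_v[r-1]\succ 0$: indeed $A_v[r-1]$ factors as $\mathrm{diag}(x_\ell^{k-r+1})\cdot V_r\,C\,V_r^T\cdot\mathrm{diag}(x_\ell^{k-r+1})$ with $V_r$ the $r\times r$ Vandermonde on $x_1,\ldots,x_r$, and both $V_r$ and $\mathrm{diag}(x_\ell^{k-r+1})$ are invertible precisely because the $x_\ell$ are distinct and nonzero. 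Finally, solving the forward recursion $v_j=\varphi_0 v_{j-r}+\cdots+\varphi_{r-1}v_{j-1}$ from Proposition~\ref{prg-2809-1955} for $v_{j-r}$, permissible since $\varphi_0\neq 0$, and reindexing yields a backward recursion with coefficients $-\varphi_1/\varphi_0,\ldots,-\varphi_{r-1}/\varphi_0,1/\varphi_0$; by uniqueness of the $\psi_i$ (valid since $A_v[r-1]$ is invertible), these are the canonical nrg coefficients, giving~\eqref{psi-varphi-2809-1922}.

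The implication $(4)\Rightarrow(3)$ is immediate. For $(3)\Rightarrow(1)$, suppose $v$ is rg and, for contradiction, $\varphi_0=0$, so that $0$ is a zero of $p$ and hence an atom of~$\mu$. Then the nrg condition via Proposition~\ref{prg-2809-1955}.\ref{pt2-nrg-2809-1956} produces a rank-preserving psd extension $\tilde v=(v_{-2},v_{-1},v)$; iterating the flat extensions of Proposition~\ref{prg-2809-1955} extends $\tilde v$ to an infinite psd Hankel sequence of rank $r$, which admits an $r$-atomic representing measure~$\nu$. The pushforward $d\eta:=x^2\,d\nu$ then satisfies $\int x^j\,d\eta=\tilde v_{j+2}=v_j$ for $0\le j\le 2k$, so $\eta$ represents~$v$; but $\eta$ kills any zero atom of~$\nu$, so if $\nu$ had one then $\eta$ would have at most $r-1$ atoms, contradicting the fact that every representing measure of $v$ has at least $r=\Rank A_v$ atoms. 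Hence $\nu$ has no zero atom, $\eta=\mu$ has none either, and $\varphi_0\neq 0$. Finally, $(2)\Leftrightarrow(3)$ follows by applying $(1)\Leftrightarrow(3)$ to $v^{(\rev)}$: a short reindexing identifies the nrg coefficients of $v$ with the prg coefficients of $v^{(\rev)}$ in reversed order, so $\psi_{r-1}\neq 0$ for $v$ coincides with $\varphi_0\neq 0$ for $v^{(\rev)}$, while being rg is manifestly invariant under reversal.

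I expect the main obstacle to be the simultaneous verification in step $(1)\Rightarrow(4)$ that $\Rank v=\Rank v^{(\rev)}$ and $A_v[r-1]\succ 0$. Both fall out cleanly from the Vandermonde factorization of the moment matrix once the representing measure is in hand, and both rely crucially on the nonvanishing of the atoms ensured by $\varphi_0\neq 0$.
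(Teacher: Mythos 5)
Your argument is correct, but it follows a genuinely different route from the paper's. The paper works entirely with the columns of $A_v$: assuming prg and $\varphi_0\neq 0$, it solves \eqref{recursive-generation-equivelant} for $\mathbf{v}_i$ in terms of $\mathbf{v}_{i+1},\ldots,\mathbf{v}_{i+r}$ and shows inductively that each window $\{\mathbf{v}_{i+1},\ldots,\mathbf{v}_{i+r}\}$ is linearly independent, which yields nrg, the formula \eqref{psi-varphi-2809-1922} and $\Rank v=\Rank v^{(\rev)}$ in one stroke; for $\ref{pt3-prg-nrg-2809-1421}\Rightarrow\ref{pt1-prg-nrg-2809-1420}$ it notes that $\varphi_0=0$ would give $\mathbf{v}_0\notin\Span\{\mathbf{v}_1,\ldots,\mathbf{v}_k\}$, contradicting the inclusion forced by nrg. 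You instead import the full solution of the THMP (Theorem \ref{Hamburger}): the unique $r$-atomic representing measure of a singular prg sequence, the Vandermonde factorization $A_v=VCV^{T}$, and the pushforwards under $x\mapsto 1/x$ and $d\eta=x^{2}\,d\nu$. This makes the meaning of $\varphi_0\neq0$ (no atom at the origin) and of reversal completely transparent and delivers $A_v[r-1]\succ0$ and the rank equality simultaneously, at the price of invoking much heavier machinery for what the paper settles with elementary linear algebra; since the proposition is an ingredient in the proof of Theorem \ref{strongHamburger-general}, the lighter argument is preferable there, although there is no circularity in your version because Theorem \ref{Hamburger} is quoted from Curto--Fialkow. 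One step you should spell out: in $\ref{pt3-prg-nrg-2809-1421}\Rightarrow\ref{pt1-prg-nrg-2809-1420}$, to continue $\tilde v=(v_{-2},v_{-1},v)$ by forward flat extensions (equivalently, to apply Theorem \ref{Hamburger} to it) you need $\tilde v$ to be prg; this does not follow from the backward flatness alone, but it does follow from $r=\Rank A_v(k-1)\leq\Rank A_{\tilde v}(k)\leq\Rank A_{\tilde v}=r$, using that $v$ itself is prg.
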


\begin{proof}
		First we prove the implication $\ref{pt1-prg-nrg-2809-1420}\Rightarrow\ref{pt2-prg-nrg-2809-1420}$.
	By definition of $\Rank v=r$, the set $\{\mbf{v}_0,\ldots,\mbf{v}_{r-1}\}$ is linearly independent.
	Since $v$ is prg, Proposition \eqref{prg-2809-1955}.\ref{pt1-prg-2809-1956} implies that \eqref{recursive-generation-equivelant} holds for $j=r,\ldots,k$.
	Since $\varphi_0\neq 0$, it follows that for $i=0,\ldots,k-r$ 
		$$\mbf{v}_i=-\sum_{j=1}^{r-1}\frac{\varphi_{j}}{\varphi_0}\mbf{v}_{i+j}+\frac{1}{\varphi_0} \mbf{v}_{i+r},$$
	and inductively every set $\{\mbf{v}_{i+1},\ldots,\mbf{v}_{i+r}\}$, $i=0,\ldots, k-r$, is linearly independent.
	Hence, \eqref{negative-recursive-generation-equivelant} is true with $\psi_j=-\frac{\varphi_{j+1}}{\varphi_0}$, $j=0,\ldots,r-2$, and 
	$\psi_{r-1}=\frac{1}{\varphi_0}$.

		The proof of the implication $\ref{pt1-prg-nrg-2809-1420}\Leftarrow\ref{pt2-prg-nrg-2809-1420}$ is analoguous to the proof
	of $\ref{pt1-prg-nrg-2809-1420}\Rightarrow\ref{pt2-prg-nrg-2809-1420}$ only that the induction step is in the backward direction.

		Since $\ref{pt1-prg-nrg-2809-1420}$ and $\ref{pt2-prg-nrg-2809-1420}$ are equivalent, the implication 
	$\ref{pt1-prg-nrg-2809-1420}\Rightarrow\ref{pt3-prg-nrg-2809-1421}$ follows. The nontrivial part of the implication 
	$\ref{pt1-prg-nrg-2809-1420}\Leftarrow \ref{pt3-prg-nrg-2809-1421}$ is $\varphi_0\neq 0$.
	If $\varphi_0=0$, then since \eqref{recursive-generation-equivelant} holds for $j=r,\ldots, k$ and 
	$\mbf{v}_{0},\ldots,\mbf{v}_{r-1}$ are linearly independent, it follows that $\mbf{v}_0\notin \Span\{\mbf{v}_1,\ldots,\mbf{v}_{k}\}$.
	Since $v$ is singular, $A_v$ is also singular, and consequently $A_{v^{(\rev)}}$ and $v^{(\rev)}$ are both singular. 
	Since $v$ is nrg, it follows from \eqref{negative-recursive-generation-equivelant}, used for $j=k-r$, that
	$\mbf{v}_0\in \Span\{\mbf{v}_1,\ldots,\mbf{v}_{k}\}$, which is a contradiction. Hence, $\varphi_0\neq 0$.
	
	The nontrivial implication of the equivalence $\ref{pt1-prg-nrg-2809-1420}\Leftrightarrow\ref{pt4-prg-nrg-2809-1424}$ is 
	$\ref{pt1-prg-nrg-2809-1420}\Rightarrow\ref{pt4-prg-nrg-2809-1424}.$ Note that the equalities $\Rank v=\Rank v^{(\rev)}$ and 
	\eqref{psi-varphi-2809-1922} follow from the proof of the implication $\ref{pt1-prg-nrg-2809-1420}\Rightarrow\ref{pt2-prg-nrg-2809-1420}$.
\end{proof}

%

%



For $x\in \RR^m$ we use $\delta_x$ to denote the probability measure on $\RR^m$ such that $\delta_x(\{x\})=1$. 
By a \textbf{finitely atomic positive measure} on $\RR^m$ we mean a measure of the form $\mu=\sum_{j=0}^\ell \rho_j \delta_{x_j}$, 
where $\ell\in \NN$, each $\rho_j>0$ and each $x_j\in \RR^m$. The points $x_j$ are called 
\textbf{atoms} of the measure $\mu$ and the constants $\rho_j$ the corresponding \textbf{densities}.

For $v:=(v_1,\ldots, v_{m})\in \RR^m$ we denote by $V_v\in \RR^{m\times m}$ the Vandermondo matrix 
	$$V_v:=
		\left(\begin{array}{cccc}
		1 & 1 & \cdots & 1\\
		v_1 & v_2 & \cdots & v_m\\
		\vdots & \vdots &  & \vdots\\
		v_1^{m-1} & v_2^{m-1} & \cdots & v_m^{m-1}
		\end{array}\right).$$ 

The solution of the THMP of degree $2k$ is the following.

\begin{theorem}[{\cite[Theorems 3.9 and 3.10]{CF91}}]\label{Hamburger}
	For $k\in \NN$ and $\beta=(\beta_0,\ldots,\beta_{2k})\in \RR^{2k+1}$ with $\beta_0>0$, the following statements are equivalent:
\begin{enumerate}[(1)]	
	\item\label{pt1-130222-1851} There exists a $\RR$--representing measure for $\beta$, i.e., supported on $\RR$.
	\item There exists a $(\Rank \beta)$--atomic representing measure for $\beta$.
	\item\label{pt3-130222-1851} $\beta$ is positively recursively generated.
	\item $M(0,k)\succeq 0$ and $\Rank M(0,k)=\Rank \beta$.
	\item\label{pt4-v2206} One of the following statements holds:
	\begin{enumerate}
		\item $M(0,k)\succ 0$.
		\item $M(0,k)\succeq 0$ and $\Rank M(0,k)=\Rank M(0,k-1)$.
	 \end{enumerate}
\end{enumerate}

\begin{enumerate}[(i)]
	\item\label{140222-1158}
$r\leq k$, then the $\RR$--representing measure $\mu$ is unique and of the form
	$\mu=\sum_{i=1}^{r}\rho_i\delta_{x_i},$ where $x_1,\ldots,x_r$ are the roots of the 
	generating polynomial of $\beta$,
		$$\left(\begin{array}{ccc}\rho_1 & \cdots &\rho_{r}\end{array}\right)^{T}:=V_x^{-1}u,$$
	$x=(x_1,\ldots,x_{r})$ and $u=\left(\begin{array}{ccc}\beta_0 & \cdots &\beta_{r-1}\end{array}\right)^{T}$.
	\item\label{140222-1202} $r=k+1$, then there are infinitely many $\RR$--representing measures for $\beta$. All $(k+1)$--atomic ones
		are obtained by choosing $\beta_{2k+1}\in \RR$ arbitrarily, defining 
		$\beta_{2k+2}:=u^T{(M(0,k))}^{-1}u$, where  
		$u=\left(\begin{array}{ccc}\beta_{k+1} & \cdots &\beta_{2k+1}\end{array}\right)^{T}$,
		and use \ref{140222-1158} for $\widetilde \beta:=(\beta_0,\ldots,\beta_{2k+1},\beta_{2k+2})\in \RR^{2k+3}.$ 
\end{enumerate}
\end{theorem}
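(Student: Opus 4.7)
My plan is to establish (3) $\Leftrightarrow$ (4) $\Leftrightarrow$ (5) as direct repackagings of Proposition \ref{prg-2809-1955}, note that (2) $\Rightarrow$ (1) is trivial, and then prove the remaining cycle (1) $\Rightarrow$ (3) $\Rightarrow$ (2) where the hard direction (3) $\Rightarrow$ (2) uses the explicit atomic constructions stated in (i) and (ii). For (1) $\Rightarrow$ (3), a representing measure $\mu$ gives $M(0,k) = \int V(x)V(x)^{T}\, d\mu \succeq 0$ with $V(x)=(1,x,\ldots,x^{k})^{T}$; any column relation $p(\beta)=\mbf{0}$ then forces $\int p(x)^{2}\, d\mu=0$, hence $\supp \mu \subseteq \cZ_{p}$, and integrating $x^{j}p(x)$ against $\mu$ yields the prg recursion \eqref{recursive-generation} through $j=2k$ (this endpoint being precisely what prg adds beyond the automatic range of Proposition \ref{prg-2809-1955}.\ref{pt0-prg-2809-2306}). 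Equivalences (3)--(5) then follow from Proposition \ref{prg-2809-1955}.\ref{pt1-prg-2809-1956}.

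For (3) $\Rightarrow$ (2) in the singular case $r := \Rank \beta \leq k$ (part (i)): Proposition \ref{prg-2809-1955}.\ref{pt-1-prg-2809-2310} provides $r$ distinct real roots $x_{1},\ldots,x_{r}$ of the generating polynomial. Setting $x=(x_{1},\ldots,x_{r})$ and $\rho = V_{x}^{-1}(\beta_{0},\ldots,\beta_{r-1})^{T}$, the measure $\mu := \sum_{i=1}^{r} \rho_{i}\delta_{x_{i}}$ reproduces $\beta_{0},\ldots,\beta_{r-1}$ by construction. For $j \geq r$, the monomial values $(x_{i}^{j})_{j}$ satisfy the same recursion \eqref{recursive-generation} as $\beta$ (since $x_{i}^{r}=\sum_{\ell=0}^{r-1}\varphi_{\ell}x_{i}^{\ell}$ at each root), so any weighted sum $\sum_{i}\rho_{i} x_{i}^{j}$ does too; hence $\int x^{j}\, d\mu = \beta_{j}$ through $j=2k$ by induction. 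Positivity $\rho_{i}>0$ comes from the Vandermonde factorization $A_{\beta}(r-1) = V_{x}\,\mathrm{diag}(\rho)\,V_{x}^{T}$ combined with $A_{\beta}(r-1) \succ 0$ (part of prg). Uniqueness is forced because any $\RR$-representing measure must be supported in the zero set of the generating polynomial, after which the Vandermonde system pins down the densities.

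For the nonsingular case $r=k+1$ (part (ii)): since $M(0,k) \succ 0$, choose $\beta_{2k+1}\in\RR$ freely; asking the extended Hankel matrix $A_{\widetilde{\beta}}$ with $\widetilde{\beta}=(\beta,\beta_{2k+1},\beta_{2k+2})$ to be psd of rank $k+1$ forces, via the Schur complement with respect to the invertible top-left block $M(0,k)$, the formula $\beta_{2k+2}=u^{T}M(0,k)^{-1}u$ for $u=(\beta_{k+1},\ldots,\beta_{2k+1})^{T}$. The extended $\widetilde{\beta}$ is then singular of rank $k+1$ and prg, so part (i) delivers a $(k+1)$-atomic measure; varying $\beta_{2k+1}$ over $\RR$ yields different generating polynomials and hence infinitely many distinct representing measures. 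The main obstacle is the singular-case bookkeeping in (i): making the coincidence between prg and the recursion satisfied by $(x_{i}^{j})_{j}$ precise through $j=2k$, and extracting positivity of the densities from the Vandermonde factorization. Everything else reduces cleanly to Proposition \ref{prg-2809-1955} and the Schur complement.
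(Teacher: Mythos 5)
The paper does not prove this theorem at all: it is imported verbatim from Curto and Fialkow \cite[Theorems 3.9 and 3.10]{CF91}, so there is no in-paper argument to measure yours against. Your reconstruction follows the classical Curto--Fialkow route (rank, generating polynomial, Vandermonde factorization, flat extension in the nonsingular case) and is sound in substance. Two points should be tightened. First, in $\ref{pt1-130222-1851}\Rightarrow\ref{pt3-130222-1851}$ you verify the recursion \eqref{recursive-generation} through $j=2k$ but not the other half of the prg definition, namely $A_\beta(r-1)\succ 0$; this does follow, since $M(0,k)=\int V(x)V(x)^T d\mu\succeq 0$ together with the linear independence of the first $r$ columns forces the corresponding leading principal submatrix to be positive definite, but it has to be said explicitly because the coefficients $\varphi_i$ in \eqref{rg-coefficients-2809-2000} are only defined once $A_\beta(r-1)$ is invertible, and your appeal to ``the prg recursion'' presupposes them. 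Second, part \ref{140222-1202} asserts that \emph{all} $(k+1)$--atomic representing measures arise from the stated construction; you show that each choice of $\beta_{2k+1}$ produces one and that distinct choices produce distinct measures, but not the converse. For that, given a $(k+1)$--atomic measure $\nu$ one sets $\beta_{2k+1}:=\int x^{2k+1}\,d\nu$ and $\beta_{2k+2}:=\int x^{2k+2}\,d\nu$, observes that $A_{\widetilde\beta}=V\,\mathrm{diag}(\sigma)\,V^T$ has rank at most $k+1$, and rank additivity of the Schur complement over the invertible block $M(0,k)$ then forces $\beta_{2k+2}=u^TM(0,k)^{-1}u$, so $\nu$ is recovered by your construction. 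With these two additions the argument is a complete proof of the cited result.
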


For a vector $v\in \RR^{m}$ we denote by $v(0\mathbin{:}i)\in \RR^{i+1}$ the projection on the first $i+1$ coordinates 
and by $v(i):=v(i\mathbin{:}i)\in \RR$ the $(i+1)$--th coordinate of $v$.

We will need the following proposition in the solution of the STHMP. 

\begin{proposition}\label{140222-1130}
	Let $k\in \NN$ and $\beta=(\beta_0,\ldots,\beta_{2k})\in \RR^{2k+1}$ with $\beta_0>0$ be a real sequence such that $M(0,k)\succ 0$.
	Then: 
	\begin{enumerate}[(1)]
	\item\label{140222-1441} All but at most one $(k+1)$--atomic representing measures for $\beta$ described in Theorem \ref{Hamburger}.\ref{140222-1202} are supported on $\RR\setminus \{0\}$
		and the corresponding sequences $\widetilde \beta$ are singular and recursively generated. 
	\item\label{140222-1442} Denoting $M(0,k)=\left(\begin{array}{cccc} \mbf{v_0} & \mbf{v_1} & \cdots & \mbf{v_k} \end{array}\right)$ the $(k+1)$--atomic representing measure for $\beta$ with a nonzero density in $0$
		exists if and only if 
			$$C:=\left(\begin{array}{ccc} \mbf{v_1}(0\mathbin{:} k-1) & \cdots & \mbf{v_k}(0\mathbin{:} k-1) \end{array}\right)$$ 
		is invertible. In this case
		$\beta_{2k+1}=w^TC^{-1}w$, where $w=\left(\begin{array}{ccc} \beta_{k+1} & \cdots & \beta_{2k} \end{array}\right)^T$ and $\beta_{2k+2}$ is
		as in Theorem \ref{Hamburger}.\ref{140222-1202}.
	\end{enumerate}
\end{proposition}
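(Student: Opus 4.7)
The approach is to parametrize the $(k+1)$-atomic representing measures by the free parameter $\beta_{2k+1}\in\RR$ and, through the generating polynomial of the resulting extended sequence $\widetilde\beta=(\beta_0,\ldots,\beta_{2k+2})$, to isolate exactly when $0$ is an atom. For each choice of $\beta_{2k+1}$, Theorem~\ref{Hamburger}.\ref{140222-1202} prescribes $\beta_{2k+2}$ so that $\Rank\widetilde\beta=k+1$, and since $A_{\widetilde\beta}(k)=M(0,k)\succ 0$, Proposition~\ref{prg-2809-1955}.\ref{pt1-prg-2809-1956} shows $\widetilde\beta$ is prg. The coefficient vector $\varphi=(\varphi_0,\ldots,\varphi_k)^T$ of the generating polynomial $p(x)=x^{k+1}-\sum_{i=0}^k\varphi_i x^i$ then solves $M(0,k)\,\varphi=u$ with $u=(\beta_{k+1},\ldots,\beta_{2k+1})^T$, and the atoms are the roots of $p$ by Theorem~\ref{Hamburger}.\ref{140222-1158}. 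Thus $0$ is an atom iff $\varphi_0=0$, while by Proposition~\ref{prg-nrg-rg-2809-1419} $\widetilde\beta$ is rg iff $\varphi_0\neq 0$; so both claims (1) and (2) reduce to counting the values of $\beta_{2k+1}$ that solve $\varphi_0=0$.

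Splitting $u=\binom{w}{\beta_{2k+1}}$, the identity $\varphi_0=e_1^T M(0,k)^{-1}u$ shows this to be an affine-linear equation in $\beta_{2k+1}$ whose leading coefficient is $(M(0,k)^{-1})_{1,k+1}$. A cofactor expansion gives
\[
  (M(0,k)^{-1})_{1,k+1}=\frac{(-1)^k\det C}{\det M(0,k)},
\]
so it is nonzero precisely when $C$ is invertible. In that case $\beta_{2k+1}$ is uniquely determined: the first $k$ rows of $M(0,k)\,\varphi=u$ with $\varphi_0=0$ collapse to $C(\varphi_1,\ldots,\varphi_k)^T=w$, yielding $(\varphi_1,\ldots,\varphi_k)^T=C^{-1}w$, and the last row then delivers the claimed $\beta_{2k+1}=w^T C^{-1}w$.

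The main obstacle is the complementary case, namely that if $C$ is singular then \emph{no} $\beta_{2k+1}$ solves $\varphi_0=0$---this secures both the ``at most one'' clause in (1) and the non-existence half of (2). I plan to argue by contradiction: if $\varphi_0=0$ for some $\beta_{2k+1}$, then $u\in\Span\{\mbf{v_1},\ldots,\mbf{v_k}\}$, so projecting onto the first $k$ coordinates forces $w\in\Ran C$, say $C\mu=w$. Because $C$ is Hankel, hence symmetric, and singular, there exists $\mbf{0}\neq\nu\in\ker C$; the symmetry then yields $\nu^T w=\nu^T C\mu=0$. On the other hand, the linear independence of $\mbf{v_0},\ldots,\mbf{v_k}$ (columns of the positive-definite $M(0,k)$) forces $\sum_{j=1}^k\nu_j\mbf{v_j}\neq\mbf{0}$; its first $k$ coordinates are $C\nu=\mbf{0}$ and its last coordinate unpacks to $\nu^T w$, so $\nu^T w\neq 0$---a contradiction.

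Putting the two cases together proves (1): at most one $\beta_{2k+1}$ places an atom at $0$, and for all other choices $\varphi_0\neq 0$, so $\widetilde\beta$ is singular of rank $k+1<k+2$ and rg by Proposition~\ref{prg-nrg-rg-2809-1419}. Statement (2) then follows from the $C$-invertible analysis together with the explicit formula for $\beta_{2k+1}$ derived above, $\beta_{2k+2}$ being defined as in Theorem~\ref{Hamburger}.\ref{140222-1202}.
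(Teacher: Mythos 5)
Your proof is correct. It follows the same overall reduction as the paper's proof --- both hinge on the observation that $0$ is an atom precisely when the generating polynomial of $\widetilde\beta$ has $\varphi_0=0$, that $\varphi_0\neq 0$ is equivalent to $\widetilde\beta$ being rg via Proposition~\ref{prg-nrg-rg-2809-1419}, and that everything comes down to whether $C$ is invertible --- but the two technical pivots are executed differently. For uniqueness when $C$ is invertible, you compute $\varphi_0=e_1^TM(0,k)^{-1}u$ explicitly as an affine function of $\beta_{2k+1}$ whose leading coefficient is $(-1)^k\det C/\det M(0,k)$ by a cofactor expansion; the paper instead argues that the system \eqref{140222-1445} determines $\varphi_1,\ldots,\varphi_k$ (hence $\beta_{2k+1}$) uniquely once the columns of $C$ are independent. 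For the crucial complementary case ($C$ singular implies no atom at $0$ for any $\beta_{2k+1}$), the paper transposes the $k\times(k+1)$ matrix $\left(\mbf{u_1}(0\mathbin{:}k-1)\;\cdots\;\mbf{u_{k+1}}(0\mathbin{:}k-1)\right)$ and locates $M(1,k)\succ 0$ inside it via the Hankel structure to force rank $k$, contradicting the assumed dependence; you instead use that $C$ is symmetric, so $\Ran C=(\ker C)^\perp$, pick $\mbf{0}\neq\nu\in\ker C$, and derive $\nu^Tw=0$ from $w\in\Ran C$ while the linear independence of the columns of $M(0,k)\succ 0$ forces $\nu^Tw\neq 0$. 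Both mechanisms ultimately exploit positive definiteness of a principal submatrix of $M(0,k)$; yours buys a cleaner, coordinate-free contradiction and an explicit determinantal criterion, while the paper's stays entirely inside the rank bookkeeping it has already set up. All supporting citations you invoke (prg-ness of $\widetilde\beta$, $\Rank\widetilde\beta=k+1$, the formula $M(0,k)\varphi=u$, and the derivation of $\beta_{2k+1}=w^TC^{-1}w$ from the first $k$ rows and the last row of that system) check out.
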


\begin{proof}
	Let $\beta_{2k+1}\in \RR$ be arbitrary and $\widetilde \beta$ be defined as in Theorem \ref{Hamburger}.\ref{140222-1202}.
	By \cite[Lemma 2.3]{CF91} 
	we have that $\Rank A_{\widetilde \beta}=\Rank M(0,k)$ and hence $\widetilde\beta$ is singular.
	By Theorem \ref{Hamburger}.\ref{140222-1158}, $\widetilde \beta$ has a unique $(k+1)$--atomic representing measure supported on the set of roots $\cZ(p_{\widetilde\beta})$ of the generating polynomial $p_{\widetilde\beta}$ of $\widetilde\beta$.
	To establish \ref{140222-1441} it remains to prove that for all but one $\beta_{2k+1}$, $\cZ(p_{\widetilde\beta})$ does not contain 0 and $\widetilde \beta$ is rg.
	We write
		$A_{\widetilde \beta}
		=\left(\begin{array}{ccccc} \mbf{u_0} & \mbf{u_1} & \cdots & \mbf{u_k} & \mbf{u_{k+1}} \end{array}\right).$
	Assume that $\cZ(p_{\widetilde\beta})$ contains 0. Then $p_{\widetilde\beta}(x)=x^{k+1}-\sum_{i=1}^{k}\varphi_i x^i$ for some $\varphi_i\in \RR$ or equivalently
	$\mbf{u_{k+1}}=\sum_{i=1}^{k}\varphi_i \mbf{u_{i}}$.
	In particular, 
	\begin{equation}\label{140222-1445} 
		\mbf{u_{k+1}}(0\mathbin{:}k-1)=\sum_{i=1}^{k}\varphi_i \mbf{u_{i}}(0\mathbin{:}k-1)
	\end{equation} 
	and $\beta_{2k+1}=\mbf{u_{k+1}}(k)=\sum_{i=1}^{k}\varphi_i \mbf{u_{i}}(k)$.
	If the vectors $\mbf{u_{1}}(0\mathbin{:}k-1), \ldots, \mbf{u_{k}}(0\mathbin{:}k-1)$ are linearly independent, then $\varphi_1,\ldots,\varphi_k$ satisfying \eqref{140222-1445} are uniquely determined
	and hence also $\beta_{2k+1}$, such that $\cZ(p_{\widetilde\beta})$ contains $0$, is unique. Otherwise $\mbf{u_{1}}(0\mathbin{:}k-1), \ldots, \mbf{u_{k}}(0\mathbin{:}k-1)$ are linearly dependent and 
	thus
	\begin{align}\label{170222-1215}
	\begin{split}
		k
		&>\Rank \left(\begin{array}{ccccc} \mbf{u_1}(0\mathbin{:}k-1) & \mbf{u_2}(0\mathbin{:}k-1) & \cdots & \mbf{u_{k+1}}(0\mathbin{:}k-1) \end{array}\right)\\
		&=\Rank \left(\begin{array}{ccccc} \mbf{u_1}(0\mathbin{:}k-1) & \mbf{u_2}(0\mathbin{:}k-1) & \cdots & \mbf{u_{k+1}}(0\mathbin{:}k-1) \end{array}\right)^T\\
		&=\Rank \left(\begin{array}{c} (\mbf{u_1}(0\mathbin{:}k-1))^T \\ M(1,k)\end{array}\right)=k,
	\end{split}
	\end{align}
	where we used the Hankel structure of $A_{\widetilde \beta}$ in the second equality and $M(1,k)\succ 0$ in the third equality. \eqref{170222-1215} is a contradiction. Thus there is at most one
	$\beta_{2k+1}$ such that $\cZ(p_{\widetilde\beta})$ contains 0. If $\cZ(p_{\widetilde\beta})$ does not contain 0, then
	 $p_{\widetilde\beta}(x)=x^{k+1}-\sum_{i=0}^{k}\varphi_i x^i$ with $\varphi_0\neq 0$. By Proposition \ref{prg-nrg-rg-2809-1419}, $\widetilde\beta$ is rg in this case. This proves \ref{140222-1441}.
	The statement
	\ref{140222-1442} also follows from the proof of \ref{140222-1441} above by noticing that $\mbf{u_i}(0\mathbin{:}k)=\mbf{v_i}(0\mathbin{:}k)$ for $i=0,\ldots,k$ and $\mbf{u_{k+1}}(0\mathbin{:}k-1)=w$.
\end{proof}

\section{The STHMP and the TMP with variety $xy=1$}
\label{S3}

In this section we first solve the STHMP (see Theorem \ref{strongHamburger-general}) and then as a corollary obtain the solution of the TMP for the curve $xy=1$ (see Corollary \ref{posledica-12:51}). 

\begin{theorem}\label{strongHamburger-general}
	For $k_1,k_2\in \NN$, let $\beta:=\beta^{(-2k_1,2k_2)}=
	(\beta_{-2k_1},\beta_{-2k_1+1},\ldots\beta_{2k_2})$ be a real sequence of degree $(-2k_1,2k_2)$,
	such that $\beta_{-2k_1}>0$, with the associated moment matrix $M(-k_1,k_2)$. The following statements are equivalent:
\begin{enumerate}[(1)]	
	\item\label{pt1-2609-1306} There exists a representing measure for $\beta$ supported on $\RR\setminus\{0\}$.
	\item\label{pt2-2609-1309} There exists a $(\Rank \beta)$--atomic representing measure for $\beta$ supported on $\RR\setminus\{0\}$.
	\item\label{rg-21-12-1158} $\beta$ is recursively generated.
	\item\label{rankcard-29-09-1325}  $M(-k_1,k_2)\succeq 0$ and one of the following statements holds:
	\begin{enumerate}[(a)]
		\item\label{1712-2257} $M(-k_1,k_2)\succ 0$.
		\item\label{pt2-0310-0958} $\Rank M(-k_1,k_2)=\Rank M(-k_1,k_2-1)=\Rank M(-k_1+1,k_2).$
	\end{enumerate}
\end{enumerate}

Moreover, if $\beta$ with $r=\Rank \beta$ has a $(\RR\setminus\{0\})$--representing measure and:
\begin{enumerate}[(i)]
	\item\label{140222-1407} $r\leq k_1+k_2$, then the representing measure is unique and of the form
		$\mu=\sum_{i=1}^{r}\rho_i\delta_{x_i},$ where $x_1,\ldots,x_r$ are the roots of the 
		generating polynomial of $\beta$ and $\rho_1,\ldots,\rho_r>0$ the corresponding densities.
	\item\label{140222-1408} $r=k_1+k_2+1$, then there are infinitely many $(k_1+k_2+1)$--atomic representing measures for $\beta$.
		Denoting $M(-k_1,k_2)=\left(\begin{array}{cccc} \mbf{v_0} & \mbf{v_1} & \cdots & \mbf{v_{k_1+k_2}} \end{array}\right)$,
		they are obtained by choosing any $\beta_{2k_2+1}\in \RR$, 
		which is not equal to $v^TC^{-1}v$ if $C$ is invertible, where 	
			$$C=\left(\begin{array}{ccc} \mbf{v_1}(0\mathbin{:}k_1+k_2-1) & \cdots & \mbf{v_k}(0\mathbin{:}k_1+k_2-1) \end{array}\right)$$
		and $v=\left(\begin{array}{cccc} \beta_{-k_1+k_2+1} & \cdots & \beta_{2k_2}\end{array}\right)^T$,
		defining 
			$$\beta_{2k_2+2}=u^T\left(M(-k_1,k_2)\right)^{-1}u,$$
		where $u=\left(\begin{array}{cccc} \beta_{-k_1+k_2+1}&\cdots&\beta_{2k_2}&\beta_{2k_2+1}\end{array}\right)^T$,
		and then use \ref{140222-1407} for $$\widetilde\beta=(\beta_{-2k_1},\ldots,\beta_{2k_2+1},\beta_{2k_2+2}).$$
\end{enumerate}
\end{theorem}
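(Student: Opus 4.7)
My plan is to reduce the STHMP for $\beta$ to the classical THMP via a change of measure, and then to apply Theorem~\ref{Hamburger} together with Propositions~\ref{140222-1130} and \ref{prg-nrg-rg-2809-1419} to account for the extra constraint that $0$ lies outside the support. Concretely, I introduce the shifted sequence
\[
	\gamma = (\gamma_0,\ldots,\gamma_{2(k_1+k_2)}),\qquad \gamma_j := \beta_{j-2k_1},
\]
so that $M(-k_1,k_2)(\beta)$ and $M(0,k_1+k_2)(\gamma) = A_\gamma$ are the same matrix up to relabeling of the columns $X^{i} \mapsto X^{i+k_1}$. The assignment $d\nu(x) := x^{-2k_1}\,d\mu(x)$ (with inverse $d\mu = x^{2k_1}\,d\nu$) gives a bijection between positive Borel measures on $\RR\setminus\{0\}$ representing $\beta$ and positive Borel measures on $\RR\setminus\{0\}$ representing $\gamma$. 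This bijection preserves the support, hence the number of atoms, so statements~(1) and (2) for $\beta$ are equivalent to the analogous statements for $\gamma$.

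Once the problem is translated to $\gamma$, I proceed by cases using Theorem~\ref{Hamburger} applied to $\gamma$. In the nonsingular case $M(-k_1,k_2) \succ 0$ (case~(4)(a)), Theorem~\ref{Hamburger}.\ref{140222-1202} produces a one-parameter family of $(k_1+k_2+1)$-atomic $\RR$-representing measures for $\gamma$ parameterized by $\beta_{2k_2+1}\in\RR$, and Proposition~\ref{140222-1130}.\ref{140222-1441} ensures that all but at most one choice gives a measure supported on $\RR\setminus\{0\}$; the explicit forbidden value $v^{T} C^{-1}v$ and the formula for $\beta_{2k_2+2}$ in statement~(ii) are read off from Proposition~\ref{140222-1130}.\ref{140222-1442} and Theorem~\ref{Hamburger}.\ref{140222-1202}. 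In the singular case $M(-k_1,k_2)\succeq 0$ with $r = \Rank M(-k_1,k_2) \le k_1+k_2$, Theorem~\ref{Hamburger}.\ref{140222-1158} furnishes a unique representing measure supported on the roots of the generating polynomial $p_\gamma(x) = x^{r} - \sum_{i=0}^{r-1}\varphi_i x^{i}$; this measure avoids $0$ iff $\varphi_0 \neq 0$, which by Proposition~\ref{prg-nrg-rg-2809-1419} is equivalent to $\gamma$ being nrg in addition to being prg, and statement~(i) then follows directly.

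To assemble the equivalences $(1)\Leftrightarrow(2)\Leftrightarrow(3)\Leftrightarrow(4)$, I translate the two rank equalities in (4)(b) into statements about $\gamma$: by Proposition~\ref{prg-2809-1955}.\ref{pt1-prg-2809-1956} and Proposition~\ref{prg-2809-1955}.\ref{pt2-nrg-2809-1956}, the equality $\Rank M(-k_1,k_2) = \Rank M(-k_1,k_2-1)$ is equivalent to $\gamma$ being prg, and $\Rank M(-k_1,k_2) = \Rank M(-k_1+1,k_2)$ is equivalent to $\gamma$ being nrg; together they say $\gamma$ is rg in the Hankel sense of Section~\ref{S2}. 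Finally, the Laurent rg condition~(3) on $\beta$ translates to the Hankel rg condition on $\gamma$ under the shift $X^{i}\mapsto X^{i+k_1}$, since $\RR[x^{-1},x]_{k_1,k_2}\cdot x^{k_1} = \RR[x]_{k_1+k_2}$ identifies the two families of column relations; moreover, this forces $M(-k_1,k_2)\succeq 0$ via the extension aspect of Proposition~\ref{prg-2809-1955}.\ref{pt1-prg-2809-1956}.

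The main obstacle I anticipate is the singular case: one must show that the extra rank equality (equivalently, the Laurent rg condition) captures \emph{precisely} the failure of $0$ to appear in the support of the unique representing measure. The crux is recognizing that $0$ is a root of $p_\gamma$ iff $\varphi_0 = 0$, and then using Proposition~\ref{prg-nrg-rg-2809-1419} to promote prg of $\gamma$ to full rg under the assumption $\varphi_0\neq 0$. A secondary but routine task is the consistent bookkeeping between the Laurent rg of the introduction and the Hankel rg of Section~\ref{S2}, which matches under the shift once one checks that multiplication by a Laurent polynomial $q$ with $pq\in\RR[x^{-1},x]_{k_1,k_2}$ corresponds correctly to admissible shifts of column relations in the Hankel matrix $A_\gamma$.
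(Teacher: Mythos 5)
Your proposal is correct, and it reaches the conclusion by a genuinely different mechanism than the paper for the central implication $\ref{rankcard-29-09-1325}\Rightarrow\ref{pt2-2609-1309}$. The paper also works with the Hankel reindexing of $\beta$, but it applies Theorem \ref{Hamburger} only to the nonnegative-degree part $(\beta_0,\ldots,\beta_{2(k_2+m)})$ (after first extending the sequence to the right so that this part is singular enough to determine the measure), and then recovers the negative moments by a downward induction on $j=-1,\ldots,-2k_1$ that combines the identity $x_\ell^{j}=\varphi_0^{-1}x_\ell^{r+j}-\sum_{i}\varphi_i\varphi_0^{-1}x_\ell^{i+j}$ with the nrg relation \eqref{negative-recursive-generation} and Proposition \ref{prg-nrg-rg-2809-1419}.\ref{pt4-prg-nrg-2809-1424}. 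Your substitution $d\nu=x^{-2k_1}\,d\mu$ makes that induction and the auxiliary right-extension unnecessary: the whole Laurent sequence is carried to the whole Hankel sequence $\gamma$ in one step, the correspondence preserves supports and atom counts, and the only remaining issue is whether the THMP solution for $\gamma$ can be chosen to avoid $0$ --- which you settle with exactly the ingredients the paper itself uses (Proposition \ref{prg-nrg-rg-2809-1419} via $\varphi_0\neq 0$ in the singular case, Proposition \ref{140222-1130} in the nonsingular case, including the description of the forbidden value $v^TC^{-1}v$ for the moreover part). Your route is shorter and, I think, cleaner. Two small corrections: first, deriving $M(-k_1,k_2)\succeq 0$ from \ref{rg-21-12-1158} should be attributed to Theorem \ref{Hamburger} (prg yields a representing measure, hence positive semidefiniteness), not to Proposition \ref{prg-2809-1955}.\ref{pt1-prg-2809-1956}, whose hypotheses already assume $A_v\succeq 0$; second, the ``recursively generated'' in \ref{rg-21-12-1158} is the sequence notion of Section \ref{S2} (prg and nrg), which is what the paper's own proof uses, so your additional translation between Laurent column relations and Hankel column relations is harmless but not actually required.
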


\begin{remark}
Before proving Theorem \ref{strongHamburger-general} let us mention that the matrix STHMP was already considered by Simonov in \cite{Sim06}.
Let $N\in \NN$ and $H_N(\CC)$ be the set of $N\times N$ complex hermitian matrices.
The matrix STHMP of degree $(-2k_1,2k_2)$, $k_1,k_2\in \ZZ_+$ refers to the case when $\{S_i\}_{i=-2k_1}^{2k_2}$ is a sequence of hermitian $N\times N$ complex matrices and one wants to find all positive $H_N(\CC)$--valued Borel measure $\mu$ such that  
	\begin{equation}\label{moment-measure-matrix}
		S_i=\int_{\RR}x^i d\mu\quad  (i\in \ZZ,\; -2k_1\leq i\leq 2k_2).
	\end{equation}
holds. In \cite{Sim06}, the author gave necessary and sufficient conditions for the solvability of the STHMP of degree $(-2m,2m)$, $m\in \NN$, and also described all solutions in terms of self-adjoint extensions of a certain, not necessarily everywhere defined, linear operator on the finite dimensional Hilbert space of $N$--vector Laurent polynomials. The operator techniques used in \cite{Sim06} are in fact not sensitive to the assumption that $k_1=k_2=m$ and can be verbatim extended to the general degree $(-2k_1,2k_2)$
case, where $k_1,k_2\in \NN$. Moreover, using the same techniques one can also solve the matrix THMP, i.e., the sequence $\beta$ is of degree $(0,2m)$ or even of degree $(2m_1,2m_2)$, where $m,m_1,m_2\in \NN$. Since except solvability we are also interested in the more concrete description of the minimal measures in the scalar STHMP case, where a \textbf{minimal measure} refers to the representing measure with the smallest possible number of atoms, we give a proof of Theorem \ref{strongHamburger-general}  based on the application of Theorem \ref{Hamburger} in Subsection \ref{sub-sec-3-1}.
Then, in Subsection \ref{140222-2346}, we explain the connection with Simonov's work. 
\end{remark}

\subsection{Proof of Theorem \ref{strongHamburger-general} using the solution of the THMP}
\label{sub-sec-3-1}
	First we prove the implication $\ref{pt1-2609-1306} \Rightarrow \ref{rankcard-29-09-1325}$.
 	If $\beta$ is nonsingular, then we have $M(-k_1,k_2)\succ 0$, which is \ref{1712-2257}.
	Else $\beta$ is singular and $M(-k_1,k_2)\not\succ 0$ holds.
	Since $\beta$ admits a measure, it can be extended with $$\beta_{-2k_1-2},\beta_{-2k_1-1},\beta_{2k_2+1},\beta_{2k_1+2}\in \RR$$
	to a sequence $\beta^{(-2k_1-2,2k_2+2)}$ which admits a measure. 
	By	\ref{pt1-prg-2809-1956} and \ref{pt2-nrg-2809-1956} of Proposition \ref{prg-2809-1955}, \ref{pt2-0310-0958} holds.

	Next we prove the implication $\ref{pt2-2609-1309}\Leftarrow \ref{rankcard-29-09-1325}$.
	We separate two cases:\\

\noindent\textbf{Case 1.} $M(-k_1,k_2)\succeq 0$ and  $M(-k_1,k_2)\not\succ 0$: 
			Since \ref{pt2-0310-0958} holds, there exist by Proposition \ref{prg-2809-1955}.\ref{pt1-prg-2809-1956} unique $\beta_{2k_2+1},\beta_{2k_1+2}\in \RR$ such that
			$M(-k_1,k_2+1)\succeq 0$ and $\Rank M(-k_1,k_2)=\Rank M(-k_1,k_2+1)$. Inductively, for every $m\in\NN$ there is a unique extension of $\beta^{(-2k_1,2k_2)}$
			to $\beta^{(-2k_1,2(k_2+m))}$, such that $M(-k_1,k_2+m)\succeq 0$ and $\Rank M(-k_1,k_2)=\Rank M(-k_1,k_2+m)$.
			Write $r=\Rank \beta$ and let $m\in \NN$ be such that $k_2+m\geq r$. Let $p(x)=x^r-\sum_{i=0}^{r-1}\varphi_i x^i$ be the generating polynomial of $\beta^{(-2k_1,2k_2)}$.
			By Theorem \ref{Hamburger}, there exists a unique measure $\mu=\sum_{\ell=1}^r \rho_\ell \delta_{x_\ell}$ for $\beta^{(0,2(k_2+m))}$, where $x_1,\ldots,x_r\in \RR$ are zeroes of $p$
			and $\rho_1,\ldots,\rho_r$ are the corresponding densities. First note by Proposition \ref{prg-nrg-rg-2809-1419} that $\varphi_0\neq 0$ and hence all atoms $x_\ell$ are nonzero.
			We will prove that this is also the representing measure for $\beta^{(-2k_1,0)}$.
			Let us assume that $\mu$ represents $\beta_{j+1},\beta_{j+2},\ldots,\beta_{j+r}$ for some $-2k_1\leq j\leq -1$ and prove that it also represents $\beta_{j}$. Note that for $j=-1$ the
			assumption that $\mu$ represents $\beta_0,\beta_1,\ldots, \beta_{r-1}$ holds and the validity for $j<-1$ will hold by induction. We have:
			\begin{align*}
				\sum_{\ell=1}^{r}\rho_\ell x_\ell^j
					&=\sum_{\ell=1}^{r}\rho_\ell \left(\frac{1}{\varphi_0}x_\ell^{r+j}-\sum_{i=1}^{r-1}\frac{\varphi_i}{\varphi_0} x_\ell^{i+j}\right)
					=\sum_{\ell=1}^{r}\rho_\ell \left(\psi_{r-1}x_\ell^{r+j}+\sum_{i=1}^{r-1}\psi_{i-1} x_\ell^{i+j}\right)\\
					&=\sum_{i=1}^r\psi_{i-1}\left(\sum_{\ell=1}^{r}\rho_\ell x_\ell^{i+j} \right)
					=\sum_{i=1}^r\psi_{i-1}\beta_{i+j}=\beta_{j}.
			\end{align*}
			where the first equality follows by expressing $x_\ell^{j}$ from $\frac{x_\ell^j}{\varphi_0}\cdot p(x_\ell)$ which is equal to 0, 
			the second by Proposition \ref{prg-nrg-rg-2809-1419}.\ref{pt4-prg-nrg-2809-1424}, 
			the forth by the hypothesis that $\mu$ represents $\beta_{j+1},\ldots,\beta_{j+r}$,
			and the last by \ref{pt0-prg-0110-1106} and \ref{pt2-nrg-2809-1956} of Proposition \ref{prg-2809-1955}.
			Hence $\mu$ represents $\beta_j$ and by induction also $\beta^{(-2k_1,0)}$.\\

\noindent\textbf{Case 2.} $M(-k_1,k_2)\succ 0$: By Proposition \ref{140222-1130} there exist $\beta_{2k_2+1}, \beta_{2k_2+2}\in \RR$ such that $\widetilde \beta=(\beta,\beta_{2k_2+1},\beta_{2k_2+2})$
	is singular and rg. By Proposition \ref{prg-2809-1955}.\ref{pt1-prg-2809-1956},\ref{pt2-nrg-2809-1956}, $\widetilde \beta$ satisfies	
		$$\Rank M(-k_1,k_2)=\Rank \left(M(-k_1,k_2+1)(\widetilde\beta)\right)=\Rank \left(M(-k_1+1,k_2+1)(\widetilde\beta)\right).$$
	Now we use Case 1 for $\widetilde\beta$ to establish \ref{rankcard-29-09-1325}.\\

	The implication $\ref{pt1-2609-1306}\Leftarrow \ref{pt2-2609-1309}$ is trivial.
	The equivalence $\ref{rg-21-12-1158}\Leftrightarrow\ref{rankcard-29-09-1325}$ follows from Theorem \ref{Hamburger} used for $\beta^{(-2k_1,2k_2)}$
	and its reversed sequence $(\beta^{(-2k_1,2k_2)})^{(\rev)}=(\beta_{2k_2},\beta_{2k_2-1},\ldots,\beta_{-2k_1+1},\beta_{2k_1})$ as $\beta$ to obtain
	the equivalences:
	\begin{itemize}
	\item $\beta^{(-2k_1,2k_2)}$  is prg if and only if $M(-k_1,k_2)\succ 0$ or
		$[M(-k_1,k_2)\succeq 0$ and $\Rank M(-k_1,k_2)=\Rank M(-k_1,k_2-1)]$.
	\item $(\beta^{(-2k_1,2k_2)})^{(\rev)}$ is prg if and only if 
		$\beta^{(-2k_1,2k_2)}$ is nrg if and only if it holds that 
		$M(-k_1,k_2)\succ 0$ or $[M(-k_1,k_2)\succeq 0$ and the equality $\Rank M(-k_1,k_2)=\Rank M(-k_1+1,k_2)]$ is true.
	\end{itemize}
	Using both equivalences gives the equivalence $\ref{rg-21-12-1158}\Leftrightarrow\ref{rankcard-29-09-1325}$. 

	The moreover part can be read out of the proof of the implication $\ref{pt2-2609-1309}\Leftarrow \ref{rankcard-29-09-1325}$. 
	In case $\beta$ is a singular sequence, Case 1 applies, while if $\beta$ is not singular, then Case 2 applies. In Case 1 the constructed representing measure
	is precisely the one stated in \ref{140222-1407}, while in Case 2 precisely singular, rg extensions $\widetilde\beta=(\beta,\beta_{2k_2+1},\beta_{2k_2+2})$ 
	have $(k_1+k_2+1)$--atomic representing measures. By Proposition \ref{140222-1130} these are precisely the ones stated in \ref{140222-1408}.\qed

\subsection{Proof of $\ref{pt1-2609-1306}\Leftrightarrow\ref{pt2-2609-1309}\Leftrightarrow \ref{rankcard-29-09-1325}$ of Theorem \ref{strongHamburger-general} using the operator approach from \cite{Sim06}}
\label{140222-2346}

Let 
	$$\CC^N[x^{-1},x]_{k_1,k_2}=\Span\left\{u x^i\colon u\in \CC^N, i=-k_1,-k_1+1,\ldots,k_2\right\}$$
be a linear space of $N$--vector Laurent polynomials of degree at most $k_1$ in $x^{-1}$ and $k_2$ in $x$.
Let
	$\{S_i\}_{i=-2k_1}^{2k_2}$ be a sequence of hermitian $N\times N$ complex matrices, which is \textbf{positive}, i.e., 
$\sum_{i,j=-k_1}^{k_2} v_j^\ast S_{i+j}v_i\geq 0$ for every sequence $\{v_i\}_{i=-k_1}^{k_2}$ where  $v_i\in \CC^N$.
For a positive sequence $\{S_i\}_{i=-2k_1}^{2k_2}$, the Hermitian form 
	$$\langle u_1 x^i, u_2 x^j   \rangle=u_2^\ast S_{i+j} u_2$$
on $\CC^N[x^{-1},x]_{k_1,k_2}$ is a semi-inner product. Quotienting out the vector subspace 
	$$\mathcal N=\left\{p\in\CC^N[x^{-1},x]_{k_1,k_2}\colon \langle p,p   \rangle=0 \right\}$$
gives a finite dimensional Hilbert space $\cH$. We denote by $[p]:=p+\mathcal N\in \cH$ the equivalence class of $p\in \CC^N[x^{-1},x]_{k_1,k_2}$.

We call the sequence $\{S_i\}_{i=-2k_1}^{2k_2}$:
\begin{itemize}
\item \textbf{matricially positively recursively generated (mat--prg)} if for any sequence $\{v_i\}_{i=-k_1}^{k_2-1}$ with  $v_i\in \CC^N$ the following holds:
	$$\sum_{i,j=-k_1}^{k_2-1} v_j^\ast S_{i+j}v_i\geq 0\quad \text{implies that}\quad \sum_{i,j=-k_1}^{k_2-1} v_j^\ast S_{i+j+2}v_i\geq 0.$$
\item \textbf{matricially negatively recursively generated (mat--nrg)} if for any sequence $\{v_i\}_{i=-k_1}^{k_2-1}$ with $v_i\in \CC^N$ the following holds:
	$$\sum_{i,j=-k_1}^{k_2-1} v_j^\ast S_{i+j+2}v_i\geq 0\quad \text{implies that}\quad \sum_{i,j=-k_1}^{k_2-1} v_j^\ast S_{i+j}v_i\geq 0.$$
\item \textbf{matricially recursively generated (mat--rg)} if it is mat--prg and mat--nrg.
\end{itemize}

If the sequence $\{S_i\}_{i=-2k_1}^{2k_2}$ is mat--prg, the multiplication operator $A([p]):=[xp]$ on $\cH$ with domain 
	$$\dom A:=\Span\left\{[u x^i]\colon u\in \CC^N, i=-k_1,-k_1+1,\ldots,k_2-1\right\}$$
is well-defined. If moreover $\{S_i\}_{i=-2k_1}^{2k_2}$ is mat--nrg, it follows that $\ker A=\{0\}$.

Solution of the moment problem \eqref{moment-measure-matrix} from \cite{Sim06} is the following.

\begin{theorem}\label{140222-2254}\cite[Theorems 3.3 and 3.4, Corollary 3.4.1]{Sim06}
\begin{enumerate}[(1)]
	\item The moment problem \eqref{moment-measure-matrix} is solvable if and only if $\{S_i\}_{i=-2k_1}^{2k_2}$ is positive and matricially recursively generated.
	\item\label{150222-0851} There exists a one-to-one correspondence between the set of all solutions $\mu$ of \eqref{moment-measure-matrix} and the set of all
		equivalence classes for the relation of unitary equivalence of self-adjoint extensions
		$\widetilde A$ of $A$ on some larger Hilbert space $\widetilde \cH\supseteq \cH$, satisfying $\ker \widetilde A=\{0\}$ and
			$$\widetilde \cH=\overline{\Span}\left\{[u],(\widetilde A-\lambda)^{-1}[v]\colon u,v\in \CC^N,\lambda\in \rho(\widetilde A)\right\},$$
		where $\rho(\widetilde A):=\left\{\lambda \in \CC\mid \ker(\widetilde A-\lambda)=0, \Ran(\widetilde A-\lambda)=\widetilde \cH\right\}$ is the resolvent set of $\widetilde A$.
		The correspondence is given by 
			\begin{equation}\label{170222-1302}
				\langle\mu(t)u,v\rangle_{\cH}=\langle E_{\widetilde A}(t)[u],[v]\rangle_{\widetilde \cH},\quad u,v\in \CC^n,
			\end{equation}
		where $E_{\widetilde A}$ is the spectral measure of $\widetilde A$.
	\item\label{150222-0859} The  moment problem \eqref{moment-measure-matrix} has a unique solution if and only if $A$ is self-adjoint.
\end{enumerate}
\end{theorem}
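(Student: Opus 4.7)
The plan is to recast the matricial moment problem as a question about self-adjoint extensions of the multiplication operator $A$ on the finite-dimensional Hilbert space $\cH$, following the Krein--Naimark framework. The three assertions of the theorem will then follow from: well-definedness and symmetry of $A$; the spectral theorem applied to extensions $\widetilde A$ in possibly dilated spaces; and a uniqueness criterion for such extensions.

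First I would verify that, assuming positivity and mat--rg, the operator $A([p])=[xp]$ is a well-defined symmetric operator on $\dom A$ with $\ker A=\{0\}$. Well-definedness reduces to showing that $\langle p,p\rangle=0$ forces $\langle xp,xp\rangle=0$ for $p$ of degree at most $k_2-1$ in $x$, which is precisely the content of mat--prg; symmetry is immediate from the Hankel identity $\langle ux^{i+1},vx^j\rangle=v^\ast S_{i+j+1}u=\langle ux^i,vx^{j+1}\rangle$; and mat--nrg gives the analogous statement for division by $x$, yielding injectivity of $A$. For necessity of positivity and mat--rg in (1), given a solution $\mu$ and any Laurent polynomial $p(x)=\sum u_i x^i$, the identity $\sum_{i,j}u_j^\ast S_{i+j}u_i=\int_{\RR}\lvert p(t)\rvert^2\,d\mu(t)\geq 0$ gives positivity, and analogous integrals with $p$ replaced by $xp$ or $x^{-1}p$ on the truncated index sets yield mat--prg and mat--nrg.

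Next, for the bijection in (2), I would invoke Krein's theory of generalized resolvents. In the finite-dimensional setting $A$ has equal deficiency indices and admits self-adjoint extensions $\widetilde A$ on dilated Hilbert spaces $\widetilde\cH\supseteq\cH$. For each such $\widetilde A$ satisfying the stated minimality condition and $\ker\widetilde A=\{0\}$, the spectral measure $E_{\widetilde A}$ produces $\mu$ via \eqref{170222-1302}, and the moment condition follows from the identification $\widetilde A^i[u]=[ux^i]$ for $-k_1\leq i\leq k_2$ (the negative powers requiring $\ker\widetilde A=\{0\}$), which gives $v^\ast S_i u=\langle \widetilde A^i[u],[v]\rangle_{\widetilde\cH}=\int_{\RR} t^i\,d\langle \mu(t)u,v\rangle$. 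Conversely, from a solution $\mu$ one constructs $\widetilde\cH$ as the minimal reducing subspace inside $L^2(\mu;\CC^N)$ and takes $\widetilde A$ to be multiplication by $t$, which is injective because $\mu$ has no atom at $0$ (forced by finiteness of $S_{-2k_1}=\int_{\RR} t^{-2k_1}\,d\mu$).

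Finally, (3) follows from Krein's criterion that generalized resolvents of a symmetric operator are unique precisely when the operator is itself self-adjoint, so uniqueness of the measure is equivalent to $A=A^\ast$. The main obstacle is the handling of self-adjoint extensions in proper dilations $\widetilde\cH\supsetneq\cH$ together with the constraint $\ker\widetilde A=\{0\}$, which is the subtle point where the \emph{strong} (two-sided) structure of the moment problem enters; the bookkeeping needed to match all moments $S_i$ for $-2k_1\leq i\leq 2k_2$, and not merely the nonnegative ones, is the key technical content distinguishing this result from the classical truncated Hamburger setting.
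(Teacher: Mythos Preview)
The paper does not contain a proof of this theorem: it is quoted verbatim as \cite[Theorems 3.3 and 3.4, Corollary 3.4.1]{Sim06} and used as a black box in Subsection~\ref{140222-2346} to rederive the equivalence $\ref{pt1-2609-1306}\Leftrightarrow\ref{pt2-2609-1309}\Leftrightarrow\ref{rankcard-29-09-1325}$ of Theorem~\ref{strongHamburger-general}. There is therefore no in-paper proof to compare your proposal against.

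That said, your sketch is a faithful outline of Simonov's argument: the construction of $\cH$ and $A$, the role of mat--prg for well-definedness of $A$ and of mat--nrg for $\ker A=\{0\}$, the passage through Krein's parametrization of generalized resolvents to obtain the bijection in (2), and the self-adjointness criterion for uniqueness in (3) are exactly the ingredients of \cite{Sim06}. The only point where your write-up is slightly loose is the converse direction of (2): when you build $\widetilde\cH$ from a given solution $\mu$, you should take it to be the closed span of $\{[u],(M_t-\lambda)^{-1}[v]\}$ inside $L^2(\mu;\CC^N)$ (matching the minimality condition stated in the theorem) rather than an unspecified ``minimal reducing subspace'', and you must verify that two solutions $\mu$ giving unitarily equivalent pairs $(\widetilde\cH,\widetilde A)$ actually coincide, which is where the minimality hypothesis is used. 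These are bookkeeping points rather than gaps in the strategy.
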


Using Theorem \ref{140222-2254} the equivalence $\ref{pt1-2609-1306}\Leftrightarrow \ref{rankcard-29-09-1325}$ of Theorem \ref{strongHamburger-general}
easily follows by noticing that being positive and mat--rg for $N=1$ is equivalent to satisfying $\ref{rankcard-29-09-1325}$ of Theorem \ref{strongHamburger-general}.

To prove the equivalence $\ref{pt1-2609-1306}\Leftrightarrow\ref{pt2-2609-1309}$ of Theorem \ref{strongHamburger-general} we have to argue in the following way: $A$ is a symmetric operator on the finite dimensional Hilbert space.
If $\dom(A)=\cH$, then $A$ is self-adjoint and by Theorem \ref{140222-2254} its spectral measure, which is supported on the set of eigenvalues of $A$, gives the unique $(\Rank A)$--atomic
representing measure $\mu$ for $\beta$ by the correspondence \eqref{170222-1302}. Since 
\begin{align*}
	\dom(A)=\cH 
	&\Leftrightarrow [x^{k_2}]=\left[\sum_{i=-k_1}^{k_2-1} \alpha_i x^i\right]\quad \text{for some }\alpha_i\in \CC\\
	&\Leftrightarrow \Rank M(-k_1,k_2)=\Rank M(-k_1,k_2-1),
\end{align*} 
this measure is also $(\Rank \beta)$--atomic. 
Otherwise $\dom(A)\subset\cH$ is a linear subspace of codimension 1 in $\cH$ and $A$ can be extended to a self-adjoint invertible operator $\widetilde A$ on $\cH$. 
By Theorem \ref{140222-2254}, its spectral measure, which is 
$\dim \cH=(k_1+k_2+1)$--atomic, gives a $(k_1+k_2+1)$--atomic
representing measure $\mu$ for $\beta$ by the correspondence \eqref{170222-1302}.

\begin{remark}
\begin{enumerate}
\item The moreover part in Theorem \ref{strongHamburger-general} does not directly follow from Theorem \ref{140222-2254}
since one would need to observe more carefully the minimal-rank self-adjoint extensions $\widetilde A$ of $A$ from Theorem \ref{140222-2254}.\ref{150222-0851}
to describe precisely their spectral measures (or equivalently because of finite-dimensionality eigenpairs) in terms of the sequence $\beta$.
\item Using the same technique as above one can give an alternative solution of the matrix THMP (see \cite{AT00,And70,BW11,CH98,Dym89,Ers68}). Replacing $N$--vector Laurent polynomials with $N$--vector polynomials 
		$$\CC^N[x]_{k_1,k_2}:=\Span\left\{ux^i\colon u\in \CC^N, i=k_1,\ldots,k_2\right\},$$
	following the proof of Theorem \ref{140222-2254} in \cite{Sim06} one obtains the fact, that the sequence $\{S_i\}_{i=2k_1}^{2k_2}$ of hermitian $N\times N$ complex matrices admits a $H_N(\CC)$--valued
	Borel measure such that $S_i=\int_{\RR} x^i d\mu$ for each $i$ if and only if $\{S_i\}_{i=2k_1}^{2k_2}$ is positive and mat--prg, while all solutions are  precisely those described in Theorem \ref{140222-2254}.\ref{150222-0851} 
	only that the condition $\ker \widetilde A=\{0\}$ is dropped. (This condition is needed only for the equality $S_{-2k_1}=\int x^{-2k_1}d\mu$ in the STHMP case.) 
	The uniqueness part remains the same as in Theorem \ref{140222-2254}.\ref{150222-0859}. 
\end{enumerate}
\end{remark}

\subsection{The TMP with variety $xy=1$}

As a corollary of Theorem \ref{strongHamburger-general} 
we obtain a new proof of the TMP of degree $2k$ with variety $xy=1$, solved in \cite{CF05}. 
Moreover, our approach shows that in case the representing measure exists, there is always a $(\Rank M(k))$--atomic one.

Let $M(k)$ be a moment matrix associated with a bivariate sequence $\beta^{(2k)}$. We write $(M(k))_{S_1,S_2}$ for 
the restriction of $M(k)$ to rows and columns indexed by the sets $S_1$ and $S_2$, respectively. We also write
$(M(k))_{S}:=(M(k))_{S,S}$ and 
	$\cB:=\{Y^k,\ldots,Y,1,X,\ldots,X^k\}$.

\begin{corollary}\label{posledica-12:51}
	For $k\in\NN$, let $\beta^{(2k)}=(\beta_{0,0},\beta_{1,0},\beta_{0,1},\ldots,\beta_{1,2k-1},\beta_{0,2k})$ 
	be a 2--dimensional sequence of degree $2k$, such that $\beta_{0,0}>0,$ with the associated moment matrix 
	$M(k)$. 
	Then there exists a representing measure for $\beta^{(2k)}$ supported on $K:=\{(x,y)\in \RR^2\colon xy=1\}$ if and only if the following statements hold:
	\begin{enumerate}[(1)]
		\item\label{pt1-31-12-15:25} One of the following holds:
		\begin{enumerate}	
			\item $k\geq 2$ and $XY=1$ is a column relation.
			\item $k=1$ and $\beta_{1,1}=\beta_{0,0}$.
		\end{enumerate}
		\item\label{pt3-2809-1934} $M(k)$ is positive semidefinite, recursively generated and if $\Rank (M(k))_{\cB}=2k$, then 
					$$\Rank (M(k))_{\cB\setminus \{X^k\}}=
					\Rank (M(k))_{\cB\setminus \{Y^k\}}=2k.$$
	\end{enumerate}

Moreover, let $r=\Rank M(k)$ and $\beta$ admits a $K$--representing measure. Let
$$\widetilde \beta:=(\beta_{0,2k},\beta_{0,2k-1},\ldots,\beta_{0,1},\beta_{0,0},\beta_{1,0},\ldots,\beta_{2k,0}).$$ Then:
\begin{enumerate}[(i)]
	\item\label{140222-1407} If $r\leq 2k$, 
		then the representing measure is unique and of the form
		$\mu=\sum_{i=1}^{r}\rho_i\delta_{(x_i,x_i^{-1})},$ where $x_1,\ldots,x_r$ are the roots of the 
		generating polynomial of $\widetilde \beta$ and $\rho_1,\ldots,\rho_r>0$ the corresponding densities.
	\item\label{140222-1408} If $r=2k+1$, then there are infinitely many $(2k+1)$--atomic representing measures for $\beta$.
		Denoting $A_{\widetilde \beta}=\left(\begin{array}{cccc} \mbf{v_0} & \mbf{v_1} & \cdots & \mbf{v_{2k}} \end{array}\right)$, 
		they are obtained by the following procedure:
		\begin{itemize}
	 		\item Choose any $\beta_{2k+1,0}\in \RR$, which is not equal to $v^TC^{-1}v$ if $C$ is invertible, where 	
					$C=\left(\begin{array}{ccc} \mbf{v_1}(0\mathbin{:}k_1+k_2-1) & \cdots & \mbf{v_k}(0\mathbin{:}k_1+k_2-1) \end{array}\right)$
				and $v=\left(\begin{array}{cccc} \beta_{1,0} & \cdots & \beta_{2k,0}\end{array}\right)^T$.
			\item Define	$\beta_{2k+2,0}=w^T(A_{\widetilde \beta})^{-1}w,$
				where $w=\left(\begin{array}{cccc} \beta_{1,0}&\cdots&\beta_{2k,0}&\beta_{2k+1,0}\end{array}\right)^T$.
			\item Use \ref{140222-1407} for $\widehat\beta:=(\widetilde \beta,\beta_{2k+1,0},\beta_{2k+2,0})$.	
		\end{itemize}
\end{enumerate}
\end{corollary}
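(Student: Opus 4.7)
The plan is to reduce the TMP with variety $xy=1$ to the STHMP of degree $(-2k,2k)$ solved in Theorem \ref{strongHamburger-general}. The key observation is that the map $\pi\colon\RR\setminus\{0\}\to K$, $x\mapsto(x,x^{-1})$, is a bijection inducing a one-to-one correspondence between positive Borel measures on $\RR\setminus\{0\}$ and those on $K$, under which a measure $\nu$ on $\RR\setminus\{0\}$ and its push-forward $\mu=\pi_\ast\nu$ satisfy $\int x^iy^j\,d\mu=\int x^{i-j}\,d\nu$. Accordingly, I would associate to $\beta^{(2k)}$ the $1$--dimensional Laurent sequence $\widetilde\beta$ exactly as in the statement of the corollary, indexed so that $\widetilde\beta_m=\beta_{m,0}$ for $m\geq 0$ and $\widetilde\beta_m=\beta_{0,-m}$ for $m<0$, and show that $\beta^{(2k)}$ has a $K$--representing measure if and only if $\widetilde\beta$ has a $(\RR\setminus\{0\})$--representing measure.

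The bridge between the two problems is that under the hypothesis \ref{pt1-31-12-15:25}, every entry $\beta_{i,j}$ of $M(k)$ is determined by $\widetilde\beta$. For $k\geq 2$, a recursive application of the column relation $XY=1$ combined with recursive generation of $M(k)$ yields $\beta_{i,j}=\beta_{i-1,j-1}$ whenever $i,j\geq 1$ and $i+j\leq 2k$, so inductively $\beta_{i,j}=\widetilde\beta_{i-j}$; for $k=1$, the only bivariate moment with both indices positive and $i+j\leq 2$ is $\beta_{1,1}$, and the identity $\beta_{1,1}=\beta_{0,0}=\widetilde\beta_0$ gives the same conclusion directly. Under this identification, the restriction $(M(k))_{\cB}$ becomes exactly the STHMP moment matrix $M(-k,k)(\widetilde\beta)$ via $Y^\ell\leftrightarrow X^{-\ell}$, and the rank conditions in \ref{pt3-2809-1934} match the alternatives of \ref{rankcard-29-09-1325}: positive definiteness of $M(-k,k)(\widetilde\beta)$ corresponds to $\Rank(M(k))_{\cB}=2k+1$, while the simultaneous rank equalities $\Rank(M(k))_{\cB\setminus\{X^k\}}=\Rank(M(k))_{\cB\setminus\{Y^k\}}=2k$ correspond to the degenerate alternative in \ref{pt2-0310-0958}.

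For the forward implication, a $K$--representing measure for $\beta^{(2k)}$ pulls back via $\pi^{-1}$ to a $(\RR\setminus\{0\})$--representing measure for $\widetilde\beta$; the column relation $XY=1$ for $k\geq 2$ (respectively $\beta_{1,1}=\beta_{0,0}$ for $k=1$) follows from \cite{CF96}, while psd-ness, recursive generation, and the rank conditions in \ref{pt3-2809-1934} follow from the necessary conditions in Theorem \ref{strongHamburger-general} applied to $\widetilde\beta$ via the dictionary above. For the converse, Theorem \ref{strongHamburger-general} produces a representing measure $\nu$ for $\widetilde\beta$ on $\RR\setminus\{0\}$, which lifts to $\mu=\pi_\ast\nu$ on $K$; the computation $\int x^iy^j\,d\mu=\int x^{i-j}\,d\nu=\widetilde\beta_{i-j}=\beta_{i,j}$ (the final equality being the dictionary from the previous paragraph) confirms that $\mu$ represents $\beta^{(2k)}$. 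The moreover part transfers verbatim from the moreover part of Theorem \ref{strongHamburger-general}, since atoms $x_i\in\RR\setminus\{0\}$ lift to atoms $(x_i,x_i^{-1})\in K$ with the same densities. The main obstacle I anticipate is the verification in the second paragraph: making precise that bivariate recursive generation of $M(k)$ really does propagate the column relation $XY=1$ throughout $M(k)$, and matching the bivariate and univariate rank conditions exactly, with the case $k=1$ requiring separate care since no polynomial of degree two lives in the columns of $M(1)$.
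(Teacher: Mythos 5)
Your overall strategy is the same as the paper's: parametrize the hyperbola by $x\mapsto(x,x^{-1})$, translate $\beta^{(2k)}$ into the Laurent sequence $\widetilde\beta$ via $\beta_{i,j}=\widetilde\beta_{i-j}$ (using $XY=1$ and recursive generation to propagate the relation through $M(k)$), identify $(M(k))_{\cB}$ with the STHMP moment matrix $M(-k,k)(\widetilde\beta)$, and invoke Theorem \ref{strongHamburger-general}; the moreover part then transfers atom-by-atom. That reduction is correct and is exactly what the paper does.

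However, there is a genuine gap at the step you yourself flag as an ``anticipated obstacle'' and then do not resolve: the claim that the rank conditions in \ref{pt3-2809-1934} \emph{match} the alternatives of \ref{rankcard-29-09-1325}. They do not match literally. Condition \ref{pt3-2809-1934} imposes the equalities $\Rank (M(k))_{\cB\setminus\{X^k\}}=\Rank (M(k))_{\cB\setminus\{Y^k\}}=2k$ \emph{only} in the case $\Rank (M(k))_{\cB}=2k$, whereas alternative \ref{pt2-0310-0958} of Theorem \ref{strongHamburger-general} demands $\Rank M(-k,k)=\Rank M(-k,k-1)=\Rank M(-k+1,k)$ in \emph{every} degenerate case, including $\Rank (M(k))_{\cB}<2k$. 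So to deduce the existence of a measure from \ref{pt1-31-12-15:25} and \ref{pt3-2809-1934} you must additionally prove that when $\Rank (M(k))_{\cB}<2k$, positive semidefiniteness and recursive generation of $M(k)$ together with the column relation $XY=1$ force these rank equalities. This is the one nontrivial lemma in the proof beyond the reduction: assuming for contradiction that $\Rank (M(k))_{\cB}>\Rank (M(k))_{\cB\setminus\{X^k\}}$, the Hankel structure of $(M(k))_{\cB}$ gives $X^{k-2}\in\Span\{Y^k,\ldots,X^{k-3}\}$, Proposition \ref{prg-2809-1955} upgrades this to a relation expressing $X^{k-1}$ in terms of $Y^{k-1},\ldots,X^{k-2}$, and multiplying that relation by $X$ (using recursive generation of $M(k)$ and $XY=1$) places $X^k$ in the span of the remaining columns of $\cB$, contradicting the assumption. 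Without this argument your backward implication does not go through in the case $\Rank(M(k))_{\cB}<2k$, so the proposal as written is incomplete at precisely this point.
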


\begin{proof}
	For $m\in \{-2k,-2k+1,\ldots,2k\}$ we define the numbers $\beta_m$ by the following rule
		$$\beta_{m}
			=\left\{\begin{array}{cc}   
				\beta_{m,0},& m\geq 0,\\
				\beta_{0,-m},& m<0.
			\end{array}\right.$$

	\noindent \textbf{Claim.} 
		Let $t\in \NN$. 
		The atoms $(x_1,x_1^{-1}),\ldots (x_t,x_t^{-1})$ with densities $\rho_1,\ldots,\rho_t$
		are the $(xy-1)$-representing measure for $\beta^{(2k)}=(\beta_{i,j})_{i,j\in \ZZ^2_+,i+j\leq 2k}$
		if and only if
		the atoms $x_1,\ldots,x_t$ with densities $\rho_1,\ldots,\rho_t$
		are the $(\RR\setminus\{0\})$-representing measure for the 1--dimensional sequence $\beta:=(\beta_{-2k},\ldots,\beta_{-1},\beta_0,\beta_1,\ldots,\beta_{2k})$.\\

	The only if part follows from the following calculation:
	\begin{equation*}
	  \beta_{i,j}
			=\beta_{i-1,j-1}=\ldots
			=\left\{\begin{array}{cc}   
				\beta_{i-j,0},& i\geq j,\\
				\beta_{0,j-i},& i<j.
				\end{array}\right.
			=\beta_{i-j}=\sum_{\ell=1}^t \rho_\ell x_\ell^{i-j}=\sum_{\ell=1}^t \rho_\ell x_\ell^{i} (x_{\ell}^{-1})^j,
	\end{equation*}
	where $i,j\in \ZZ^2_+$ such that $i+j\leq 2k$.
	
	The if part follows from the following calculation:
		$$\beta_{m}
			=\left\{\begin{array}{cc}   
				\beta_{m,0},& m\geq 0,\\
				\beta_{0,-m},& m<0.
			\end{array}\right.
			=\left\{\begin{array}{rl}   
				\sum_{\ell=1}^t \lambda_\ell x_\ell^{m},& m\geq 0,\\
				\sum_{\ell=1}^t \lambda_\ell (x_\ell^{-1})^{-m},& m<0.
			\end{array}\right.
			=\sum_{\ell=1}^t \lambda_\ell x_\ell^{m},$$
	where $m=-2k,-2k+1,\ldots,2k$.\\

	Using Claim, a theorem of Bayer and Teichmann \cite{BT06}, implying that if $\beta^{(2k)}$ has a $K$--representing measure, 
	then it has a finitely atomic $K$--representing measure, and Theorem \ref{strongHamburger-general}, 
	there exists a representing measure for $\beta^{(2k)}$ supported on $K$ if and only if \ref{pt1-31-12-15:25} and \ref{pt2-31-12-15:30} are true,
	where
	\begin{enumerate}[label=(\Alph*)]
	\item\label{pt2-31-12-15:30} $M(k)$ is psd, rg and one of the following conditions holds:
	\begin{enumerate}[(a)]	
			\item\label{pta-31-12-15:39} $(M(k))_{\cB}\succ 0$.
			\item\label{case2-21-12-9:05} $\Rank (M(k))_{\cB}=
					\Rank (M(k))_{\cB\setminus \{X^k\}}=
					\Rank (M(k))_{\cB\setminus \{Y^k\}}$.
	\end{enumerate}
	\end{enumerate}
	It remains to prove the equivalence \ref{pt2-31-12-15:30} $\Leftrightarrow$ \ref{pt3-2809-1934}. 
	The nontrivial implication is \ref{pt2-31-12-15:30} $\Leftarrow$ \ref{pt3-2809-1934}.
	If $\Rank (M(k))_{\cB}=2k+1$, then \ref{pta-31-12-15:39} follows form the fact that $M(k)$ is psd. 
	If $\Rank (M(k))_{\cB}=2k$, then we are in case \ref{case2-21-12-9:05}.
	It remains to prove that in case $\Rank (M(k))_{\cB}<2k$, $(M(k))_{\cB}$ being psd and rg implies \ref{case2-21-12-9:05}. 
	By symmetry it suffices to prove that $\Rank (M(k))_{\cB}=\Rank (M(k))_{\cB\setminus\{X^k\}}$. Let us assume on contrary that   
	$\Rank (M(k))_{\cB}>\Rank (M(k))_{\cB\setminus\{X^k\}}$. This means that 
		$$\Rank (M(k))_{\cB\setminus \{X^k\}}\leq 2k-2.$$
	Since $(M(k))_{\cB}$ is a Hankel matrix in the order $Y^k,\ldots,Y,1,X,\ldots,X^k$ of rows and columns, it follows that 
		$$X^{k-2}\in \Span\{Y^{k},\ldots,Y,1,X,\ldots,X^{k-3}\}.$$	
	Proposition \ref{prg-2809-1955} implies that
		$$X^{k-1}\in \Span\{Y^{k-1},\ldots,Y,1,X,\ldots,X^{k-2}\}$$
	or equivalently
	\begin{equation}\label{relation-31-12-16:46}
		\displaystyle X^{k-1}=\sum_{i=k-1}^1\alpha_i Y^i+\sum_{j=0}^{k-2}\beta_jX^j\quad\text{for some }\alpha_i,\beta_j\in\RR.
	\end{equation}
	Since $M(k)$ is rg, multiplying \ref{relation-31-12-16:46} with $X$ and using $XY=1$, implies that
		$$X^{k}\in \Span\{Y^{k-2},\ldots,Y,1,X,\ldots,X^{k-1}\},$$ 
	which is a contradiction with 
		$\Rank (M(k))_{\cB}>\Rank (M(k))_{\cB\setminus\{X^k\}}$.
	This proves \ref{pt2-31-12-15:30} $\Leftarrow$ \ref{pt3-2809-1934}.

	The moreover part of the corollary follows from the moreover part of Theorem \ref{strongHamburger-general} by also noticing that 
	$\beta=\widetilde\beta$ and
		$\Rank \widetilde \beta=\Rank (M(k))_\cB=\Rank M(k).$
	This concludes the proof of the corollary.
\end{proof}

\begin{remark}
\cite[Proposition 2.14]{CF05}
states that in case $\Rank M(k)=2k+1$ there exists a $(\Rank M(k))$ or $(\Rank M(k)+1)$--atomic measure, depending on the choice of the moments $\beta_{2k+1,0}$ and 
$\beta_{0,2k+1}$ in the extension $M(k+1)$ (denoted by $p$ and $q$ in the proof of \cite[Proposition 2.14]{CF05}). By Corollary \ref{posledica-12:51}, $p$ and $q$ giving a 
$(\Rank M(k))$--atomic measure, exist. Note also that this is not in contradiction with \cite[Example 5.2]{CF05} which only demonstrates the role of the choices of $\beta_{2k+1,0}$ and 
$\beta_{0,2k+1}$ on the rank of the extension of $M(k)$ to the moment matrix $M(k+1)$.
\end{remark}


\section{The STHMP with a gap $\beta_{-2k_1+1}$ or $\beta_{2k_2-1}$ and the TMP with variety $x^2y=1$}
\label{S4}

In this section we first solve the STHMP of degree $(-2k_1,2k_2)$ with a
missing moment
$\beta_{-2k+1}$ or 
$\beta_{2k_2-1}$ 
and then as a corollary 
obtain the solution to the TMP for the curve $x^2y=1$. 



A \textbf{partial matrix} $A=(a_{ij})_{i,j=1}^n$ is a matrix of real numbers $a_{ij}\in \RR$, where some of the entries are not specified. 
A symmetric matrix $A=(a_{ij})_{i,j=1}^n$ is 
\textbf{partially positive semidefinite (ppsd)} 
if the following two conditions hold:
\begin{enumerate} 
  \item $a_{ij}$ is specified if and only if $a_{ji}$ is specified and $a_{ij}=a_{ji}$.
  \item All fully specified principal minors of $A$ are psd.
\end{enumerate}

Let 
	\begin{equation}\label{matrixM}
		M=\left[ \begin{array}{cc} A & B \\ C & D \end{array}\right]\in M_{n+m}
	\end{equation}
be a real matrix where $A\in M_n$, $B\in M_{n,m}$, $C\in M_{m,n}$  and $D\in M_{m}$.
The \textbf{generalized Schur complement} \cite{Zha05} of $A$ (resp.\ $D$) in $M$ is defined by
	$$M/A=D-CA^\dagger B\quad(\text{resp.}\; M/D=A-BD^\dagger C),$$
where $A^\dagger$ (resp.\ $D^\dagger$) stands for the Moore-Penrose inverse of $A$ (resp.\ $D$).

\begin{theorem}\label{strong-trunc-Hamb-without-(-2k+1)}
 Let $k_1,k_2\in \NN$, and 
	$$\beta(x):=(\beta_{-2k_1},x,\beta_{-2k_1+2},\ldots,\beta_{0},\ldots,\beta_{2k_2})$$ 
  be a sequence where each $\beta_i$ is a real number, $\beta_{-2k_1}>0$ and $x$ is a variable. 
  Let 
	$$v:=\left(\begin{array}{ccc}\beta_{-2k_1+2} & \cdots & \beta_{-k_1+k_2-1}\end{array}\right)\quad\text{and}\quad
		u:=\left(\begin{array}{ccc}\beta_{-2k_1+2} & \cdots & \beta_{-k_1+k_2}\end{array}\right)$$
  vectors, and
	$$\widetilde{A}:=\left(\begin{array}{cc} \beta_{-2k_1} & v \\ v^T & M(-k_1+2,k_2-1) \end{array}\right)
		\;\;\text{and}\;\;
			\widehat{A}:=\left(\begin{array}{cc} \beta_{-2k_1} & u \\ u^T & M(-k_1+2,k_2) \end{array}\right)$$
  matrices.
  Then the following statements are equivalent: 
\begin{enumerate}[(1)]		
	\item\label{pt1-v2509-14:19} There exists $x_0\in \RR$ and a representing measure for $\beta(x_0)$ supported on $K=\RR\setminus \{0\}$.
	\item\label{pt4-v2112-16:49v2} There exists $x_0\in \RR$ such that $\beta(x_0)$ is recursively generated.
	\item\label{pt5-2312-01:53} There exists $x_0\in \RR$ such that $\beta(x_0)$ is singular and recursively generated.
	\item\label{pt2-2312-01:59} There exists $x_0\in \RR$ and a $(\Rank M(-k_1+1,k_2))$--atomic representing measure for $\beta(x_0)$ supported on $K=\RR\setminus \{0\}$.
	\item\label{pt3-v2509-14:21v2} $A_{\beta(x)}$ is partially positive semidefinite and one of the following conditions is true: 
		\begin{enumerate}[(a)]
			\item\label{pt3a-v2509-14:21} $M(-k_1+1,k_2)\succ 0$ and $\widetilde{A}\succ 0$.
			\item\label{pt3b-v2509-14:21} 
				$\Rank M(-k_1+1,k_2-1)=\Rank M(-k_1+1,k_2)=\Rank M(-k_1+2,k_2)=\Rank \widehat A.$
		\end{enumerate}
\end{enumerate}

Moreover, assume that there exists $x_0\in \RR$ such that \ref{pt2-2312-01:59}  holds. Let 
$$s:=M(-k_1+1,k_2)\big/M(-k_1+2,k_2),\quad t:=\widehat A\big/M(-k_1+2,k_2)$$ and
$w=\left(\begin{array}{ccc}\beta_{-2k_1+3}&\cdots&\beta_{-k_1+k_2+1}\end{array}\right)$. Then:
\begin{enumerate}[(i)]
	\item\label{150222-1028} If $s=t=0$, then $x_0:=u{(M(-k_1+2,k_2))}^{\dagger} w^T$.
	\item\label{150222-1029} Else $s>0$, $t>0$ and there are two choices $x_{0,\pm}$ for $x_0$, i.e., 
		$$x_{0,\pm}=u(M(-k_1+2,k_2))^{\dagger}w^T\pm \sqrt{s\cdot t}.$$
\end{enumerate}
Once $x_0$ is fixed, the representing measure for $\beta(x_{0})$ is unique and its support consists of the roots of the generating polynomial of $\beta(x_0)$.
\end{theorem}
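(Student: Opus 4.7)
The strategy is to reduce to Theorem \ref{strongHamburger-general} applied to the completed sequence $\beta(x_0)$ for a well-chosen $x_0\in\RR$. The crucial structural observation is that the unknown moment $\beta_{-2k_1+1}$ occupies only the two symmetric off-diagonal positions $(-k_1,-k_1+1)$ and $(-k_1+1,-k_1)$ of $M(-k_1,k_2)(\beta(x))$; in particular the principal submatrices $M(-k_1+1,k_2)$, $M(-k_1+2,k_2)$, $\widetilde A$, $\widehat A$, and the vectors $u$ and $w:=(\beta_{-2k_1+3},\ldots,\beta_{-k_1+k_2+1})^T$ are independent of $x$.

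After reordering rows and columns so that $-k_1,-k_1+1$ come first, we obtain the block form
\[ M(-k_1,k_2)(\beta(x))=\begin{pmatrix}\beta_{-2k_1} & x & u\\ x & \beta_{-2k_1+2} & w^T\\ u^T & w & M(-k_1+2,k_2)\end{pmatrix}. \]
Writing $\delta:=u\,M(-k_1+2,k_2)^{\dagger}w^T$, the generalized Schur complement of the bottom-right block is the affine-in-$x$ matrix
\[ S(x):=\begin{pmatrix} t & x-\delta\\ x-\delta & s\end{pmatrix}. \]
The partial positivity of $A_{\beta(x)}$ forces $u^T,w\in\Ran M(-k_1+2,k_2)$ (a standard consequence of $M(-k_1+1,k_2)\succeq 0$ and $\widehat A\succeq 0$), so $S(x)\succeq 0$ iff $s,t\geq 0$ and $(x-\delta)^2\leq s\,t$, and $\Rank M(-k_1,k_2)(\beta(x))=\Rank M(-k_1+2,k_2)+\Rank S(x)$.

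For the equivalences, $\ref{pt1-v2509-14:19}\Rightarrow\ref{pt3-v2509-14:21v2}$ follows from Theorem \ref{strongHamburger-general} applied to $\beta(x_0)$ by restricting its positivity and rank conclusions to the $x$-free principal submatrices. Conversely, $\ref{pt3-v2509-14:21v2}\Rightarrow\ref{pt2-2312-01:59}$ is proved by case analysis: in case \ref{pt3a-v2509-14:21}, Schur-complementing $M(-k_1+2,k_2)\succ 0$ out of $M(-k_1+1,k_2)\succ 0$ and $\widetilde A\succ 0$ yields $s,t>0$, so taking $x_0=\delta\pm\sqrt{s\,t}$ makes $\Rank S(x_0)=1$, $M(-k_1,k_2)(\beta(x_0))$ is psd of rank $\Rank M(-k_1+1,k_2)$, and Theorem \ref{strongHamburger-general}\ref{pt2-0310-0958} delivers the atomic measure. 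In case \ref{pt3b-v2509-14:21}, the rank chain forces $s=t=0$ via the rank identity for generalized Schur complements, the unique $x_0=\delta$ produces $M(-k_1,k_2)(\beta(x_0))$ of the required rank, and Theorem \ref{strongHamburger-general}\ref{pt2-0310-0958} again applies. The atoms are nonzero because Proposition \ref{prg-nrg-rg-2809-1419} forces the generating polynomial of $\beta(x_0)$ to have nonzero constant term. The equivalences with the recursive-generation formulations $\ref{pt4-v2112-16:49v2}$, $\ref{pt5-2312-01:53}$ follow from Propositions \ref{prg-2809-1955} and \ref{prg-nrg-rg-2809-1419}.

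The moreover part is read off $S(x_0)$: a $(\Rank M(-k_1+1,k_2))$-atomic measure corresponds to $\Rank S(x_0)=\Rank M(-k_1+1,k_2)-\Rank M(-k_1+2,k_2)\in\{0,1\}$; rank $0$ requires $s=t=0$ and $x_0=\delta$ (case \ref{150222-1028}), while rank $1$ needs $\det S(x_0)=0$ with $(s,t)\neq(0,0)$, yielding $s,t>0$ and $x_{0,\pm}=\delta\pm\sqrt{s\,t}$ (case \ref{150222-1029}). The principal obstacle I anticipate is justifying in case \ref{pt3a-v2509-14:21} that $\widetilde A\succ 0$ together with $M(-k_1+1,k_2)\succ 0$ actually suffices to conclude $t>0$ (equivalently $\widehat A\succ 0$); this calls for a careful linear-algebraic argument showing that the extra row/column of $\widehat A$ over $\widetilde A$ is automatically positivity-compatible thanks to the data already fixed in $M(-k_1+1,k_2)$.
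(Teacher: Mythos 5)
Your overall strategy is sound and genuinely different from the paper's: you try to prove everything in a self-contained way via the generalized Schur complement $S(x)=\left(\begin{smallmatrix} t & x-\delta\\ x-\delta & s\end{smallmatrix}\right)$ of $M(-k_1+2,k_2)$ in $M(-k_1,k_2)(\beta(x))$, whereas the paper outsources the hard work to the companion results \cite[Theorems 3.1 and 4.1]{Zal+} on the THMP with one missing moment (applied to $\beta(x_0)$ and its reversal) and only adds the short observation that $\varphi_0\neq 0$ upgrades prg to rg. Your block decomposition and the rank identity $\Rank M(-k_1,k_2)(\beta(x))=\Rank M(-k_1+2,k_2)+\Rank S(x)$ are correct. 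But the attempt has two genuine gaps. The first you flag yourself: in case (5)(a) you need $t>0$, i.e.\ $\widehat A\succ 0$, and you do not derive it from $M(-k_1+1,k_2)\succ 0$, $\widetilde A\succ 0$ and partial positivity. This is not a routine restriction argument, because $\widehat A$ contains the row indexed by $X^{k_2}$, which is absent from $\widetilde A$; one must exploit the Hankel structure, and this is precisely part of the content of the Claim in \cite[Theorem 4.1]{Zal+} that the paper cites.

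The second gap is unflagged and is fatal as written. To get a measure for $\beta(x_0)$ from Theorem \ref{strongHamburger-general} (or Theorem \ref{Hamburger}) in the singular case you must verify \emph{both} equalities $\Rank M(-k_1,k_2)=\Rank M(-k_1,k_2-1)$ and $\Rank M(-k_1,k_2)=\Rank M(-k_1+1,k_2)$; your Schur-complement computation only gives the second. The first (equivalently, that $\beta(x_0)$ is prg) is not automatic and can fail for one of your two candidates $x_{0,\pm}$. Concretely, take $k_1=k_2=1$ and $(\beta_{-2},\beta_0,\beta_1,\beta_2)=(1,1,1,2)$: condition (5)(a) holds, $s=t=\tfrac12$, $\delta=\tfrac12$, so $x_{0,\pm}\in\{0,1\}$; for $x_0=1$ the completed matrix $\left(\begin{smallmatrix}1&1&1\\ 1&1&1\\ 1&1&2\end{smallmatrix}\right)$ is psd of rank $2=\Rank M(0,1)$, yet $\Rank M(-1,0)=1$, the sequence $(1,1,1,1,2)$ is not positively recursively generated, and it admits no representing measure (its only admissible atom would be $1$, whose moments do not match $\beta_2=2$); only $x_0=0$ works. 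So ``psd of rank $\Rank M(-k_1+1,k_2)$'' does not by itself deliver the measure, and your argument would certify a nonexistent one. Closing this requires showing that at least one of the candidate completions is prg — note that you never use the hypothesis $\Rank M(-k_1+1,k_2-1)=\Rank M(-k_1+1,k_2)$ from condition (5)(b), which is exactly the kind of information needed. The same omission undermines your case (5)(b) and your reading of the moreover part, and your implication from (1) to (5) (``restricting the positivity and rank conclusions to the $x$-free principal submatrices'') is likewise under-argued for the equalities $\Rank M(-k_1+1,k_2-1)=\Rank M(-k_1+1,k_2)$ and $\Rank\widehat A=\Rank M(-k_1+2,k_2)$.
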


\begin{proof}[Proof of Theorem \ref{strong-trunc-Hamb-without-(-2k+1)}]
The equivalence $\ref{pt1-v2509-14:19}\Leftrightarrow\ref{pt4-v2112-16:49v2}$ follows from Theorem \ref{strongHamburger-general}.

Now we prove the implication $\ref{pt4-v2112-16:49v2}\Rightarrow\ref{pt3-v2509-14:21v2}$.
Since $\beta(x_0)$ is rg, the sequence $\beta(x_0)$ and the reversed sequence $\beta(x_0)^{(\rev)}:=(\beta_{2k_2},\beta_{2k_2-1},\ldots,\beta_{-2k_1+2},x_0,\beta_{-2k_1})$ 
are both prg. Regarding $\beta(x_0)$ and $\beta(x_0)^{(\rev)}$ as degree $(0,2(k_1+k_2))$ sequences,
the equivalence $\ref{pt1-130222-1851}\Leftrightarrow \ref{pt3-130222-1851}$ of Theorem \ref{Hamburger} implies that they both admit representing measures on $\RR$.
Using \cite[Theorem 4.1]{Zal+} for $\beta(x_0)$ and \cite[Theorem 3.1]{Zal+} for $\beta(x_0)^{(\rev)}$, \ref{pt3-v2509-14:21v2} holds.

The implication $\ref{pt3-v2509-14:21v2}\Rightarrow\ref{pt5-2312-01:53}$ follows from \cite[Theorem 4.1]{Zal+}. Indeed, the equivalence $(\text{ii})\Leftrightarrow (\text{iii})$ of \cite[Theorem 4.1]{Zal+} implies that there exists $x_0\in \RR$ such that $\beta(x_0)$, regarded as a $(0,2(k_1+k_2))$--degree sequence, admits a $(\Rank M(-k_1+1,k_2))$--atomic $\RR$--representing measure. 
So $\beta(x_0)$ is a singular sequence. By Theorem \ref{Hamburger}, $\beta(x_0)$ is prg and $\Rank A_{\beta(x_0)}=\Rank M(-k_1+1,k_2)$. It remains to prove that $\beta(x_0)$ is nrg. 
Let $p(x):=x^r-\sum_{j=0}^{r-1}\varphi_i x^i$ be the generating polynomial of $\beta(x_0)$. If $\varphi_0=0$, then the first column of $A_{\beta(x_0)}$ is not in the span of its other columns. But this is in contradiction with  $\Rank A_{\beta(x_0)}=\Rank M(-k_1+1,k_2)$. Hence, $\varphi_0\neq 0$ and by Proposition \ref{prg-nrg-rg-2809-1419}, $\beta(x_0)$ is rg.
 
The implication $\ref{pt5-2312-01:53}\Rightarrow\ref{pt4-v2112-16:49v2}$
is trivial. So far we established the equivalences $\ref{pt1-v2509-14:19}\Leftrightarrow \ref{pt4-v2112-16:49v2}\Leftrightarrow \ref{pt5-2312-01:53} \Leftrightarrow \ref{pt3-v2509-14:21v2}$. 
The implication $\ref{pt2-2312-01:59}\Rightarrow\ref{pt1-v2509-14:19}$ is trivial. It remains to prove the implication $\ref{pt5-2312-01:53}\Rightarrow\ref{pt2-2312-01:59}$. 
Since $\beta(x_0)$ is singular and rg, 
Proposition \ref{prg-2809-1955}.\ref{pt1-prg-2809-1956} implies that  $\Rank \beta(x_0)=\Rank A_{\beta(x_0)}$,
while Proposition \ref{prg-2809-1955}.\ref{pt2-nrg-2809-1956} implies that $\Rank A_{\beta(x_0)}=\Rank M(-k_1+1,k_2)$. Hence, $\Rank \beta(x_0)=\Rank M(-k_1+1,k_2)$. 
Using Theorem \ref{strongHamburger-general} for $\beta(x_0)$ gives \ref{pt2-2312-01:59}.

For the moreover part about possible choices of $x_0$ see the Claim in the proof of \cite[Theorem 4.1]{Zal+}. The last sentence about the form of the representing measure for $\beta(x_0)$
follows from the moreover part of Theorem \ref{Hamburger}.
\end{proof}

\begin{corollary}\label{strong-trunc-Hamb-without-(2k-1)}
 Let $k_1,k_2\in \NN$, and 
	$$\beta(x):=(\beta_{-2k_1},\ldots,\beta_{2k_2-2},x,\beta_{2k_2})$$ 
  be a sequence where each $\beta_i$ is a real number, $\beta_{-2k_1}>0$ and $x$ is a variable. 
  Let 
	$$v:=\left(\begin{array}{ccc}\beta_{-k_1+k_2+1} & \cdots & \beta_{2k_2-2}\end{array}\right)\quad\text{and}\quad
		u:=\left(\begin{array}{ccc}\beta_{-k_1+k_2}& \cdots & \beta_{2k_2-2}\end{array}\right)$$
  vectors, and
	$$\widetilde{A}:=\left(\begin{array}{cc}M(-k_1+1,k_2-2) & v \\ v^T &  \beta_{2k_2}\end{array}\right)
		\quad\text{and}\quad
			\widehat{A}:=\left(\begin{array}{cc}M(-k_1,k_2-2)& u \\ u^T &  \beta_{2k_2}\end{array}\right)$$
  matrices.
  Then the following statements are equivalent: 
\begin{enumerate}[(1)]	
	\item There exists $x_0\in \RR$ and a representing measure for $\beta(x_0)$ supported on $K=\RR\setminus \{0\}$.
	\item There exists $x_0\in \RR$ such that $\beta(x_0)$ is recursively generated.
	\item There exists $x_0\in \RR$ such that $\beta(x_0)$ is singular and recursively generated.
	\item\label{160222-1630}  There exists $x_0\in \RR$ and a $(\Rank M(-k_1,k_2-1))$--atomic representing measure for $\beta(x_0)$ supported on $K=\RR\setminus \{0\}$.
	\item $A_{\beta(x)}$ is partially positive semidefinite and one of the following conditions is true: 
		\begin{enumerate}
			\item $M(-k_1,k_2-1)\succ 0$ and $\widetilde{A}\succ 0$.
			\item
				$\Rank M(-k_1+1,k_2-1)=\Rank M(-k_1,k_2-1)=\Rank M(-k_1,k_2-2)=\Rank \widehat A.$
		\end{enumerate}
\end{enumerate}
Moreover, assume that there exists $x_0\in \RR$ such that \ref{160222-1630} holds. Let 
	$$s:=M(-k_1,k_2-1)\big/M(-k_1,k_2-2),\quad t:=\widehat A\big/M(-k_1,k_2-2)$$ 
and
	$w=\left(\begin{array}{ccc}\beta_{-k_1+k_2-1}&\cdots&\beta_{2k_2-3}\end{array}\right)$. 
Then:
\begin{enumerate}[(i)]
	\item\label{150222-1028} If $s=t=0$, then $x_0:=u{(M(-k_1,k_2-2))}^{\dagger}w^T$.
	\item\label{150222-1029} Else $s>0$, $t>0$ and there are two choices $x_{0,\pm}$ for $x_0$, i.e., 
		$$x_{0,\pm}=u(M(-k_1,k_2-2))^{\dagger}w^T\pm \sqrt{s\cdot t}.$$
\end{enumerate}
Once $x_0$ is fixed, the representing measure for $\beta(x_{0})$ is unique and its support consists of the roots of the generating polynomial of $\beta(x_0)$.
\end{corollary}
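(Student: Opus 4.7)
The plan is to reduce Corollary~\ref{strong-trunc-Hamb-without-(2k-1)} to Theorem~\ref{strong-trunc-Hamb-without-(-2k+1)} by passing to the reversed sequence. Concretely, set $\gamma(x) := (\beta_{2k_2}, x, \beta_{2k_2-2}, \ldots, \beta_{-2k_1+2}, \beta_{-2k_1})$, viewed as a degree $(-2k_2, 2k_1)$ sequence with missing moment in position $-2k_2+1$. The pushforward by $t \mapsto 1/t$ sets up a bijection between finitely atomic $(\RR \setminus \{0\})$--representing measures $\mu = \sum \rho_i \delta_{x_i}$ for $\beta(x_0)$ and $\mu' = \sum \rho_i \delta_{1/x_i}$ for $\gamma(x_0)$, so existence and the number of atoms of representing measures supported on $\RR \setminus \{0\}$ are preserved.

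Next I would set up the dictionary between the Hankel data. The matrix $A_{\gamma(x)}$ equals $J A_{\beta(x)} J$ with $J$ the anti-identity, and this conjugation preserves rank, (partial) positive semidefiniteness, singularity, and the rg/prg/nrg properties (cf.\ Proposition~\ref{prg-nrg-rg-2809-1419}). Under it, $M(n_1, n_2)(\beta)$ corresponds to $M(-n_2, -n_1)(\gamma)$, so the four ranks appearing in condition~(5)(b) of the corollary correspond exactly to the four ranks in condition~(5)(b) of the theorem applied to $\gamma$ with $(k_1, k_2)$ swapped. Similarly, the matrices $\widetilde A$ and $\widehat A$ of the corollary are the images of those of the theorem applied to $\gamma$. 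The one delicate point is the positivity hypothesis $\gamma_{-2k_2} = \beta_{2k_2} > 0$ required by the theorem: in statements~(1)--(4), the assumption $\beta_{-2k_1} > 0$ forces any representing $\mu$ to be nonzero, whence $\beta_{2k_2} = \int x^{2k_2} d\mu > 0$; in~(5), $\beta_{2k_2} > 0$ can be deduced from ppsd together with the rank/Schur conditions, which rule out the fully degenerate trailing block.

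Applying Theorem~\ref{strong-trunc-Hamb-without-(-2k+1)} to $\gamma(x)$ then yields the equivalence of (1)--(5) for the corollary after translation. For the moreover part, generalized Schur complements commute with permutation similarity, so the quantities $s$ and $t$ defined in the corollary coincide with those from the theorem applied to $\gamma$, and the values $x_{0,\pm}$ transfer verbatim. The generating polynomial $p_{\beta(x_0)}$ is the reciprocal polynomial of $p_{\gamma(x_0)}$, so its roots are exactly the reciprocals of the roots of $p_{\gamma(x_0)}$, matching the atomic supports of $\mu$ and $\mu'$ under $t \mapsto 1/t$. The main obstacle I expect is not conceptual but combinatorial: carefully identifying the matrices $\widetilde A$, $\widehat A$, and the vectors $u$, $v$, $w$ of the corollary with those produced by the theorem applied to $\gamma$ requires tracking a nontrivial amount of index shifting under the anti-identity conjugation.
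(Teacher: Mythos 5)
Your proposal follows the paper's own proof exactly: the paper likewise reduces to Theorem~\ref{strong-trunc-Hamb-without-(-2k+1)} by passing to the sequence $\tilde\beta_i=\beta_{-i}$ of degree $(-2k_2,2k_1)$ and using the bijection $\sum_j\rho_j\delta_{x_j}\leftrightarrow\sum_j\rho_j\delta_{x_j^{-1}}$ between $(\RR\setminus\{0\})$--representing measures. If anything, you supply more detail than the paper does (the anti-identity conjugation dictionary for the moment matrices and Schur complements, and the verification that $\beta_{2k_2}>0$ so that the theorem's hypothesis is met, which the paper leaves implicit).
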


\begin{proof}
	Note that $\sum_{j=1}^\ell \rho_j \delta_{x_j}$, where $\rho_j>0$ are densities and $x_j\in \RR\setminus\{0\}$ are atoms, 
	is a $(\RR\setminus \{0\})$--representing measure for $\beta(x)$  if and only if 
	$\sum_{j=1}^\ell \rho_j \delta_{x_j^{-1}}$ is a $(\RR\setminus\{0\})$--representing measure for
		$$\widetilde \beta(x):=(\tilde\beta_{-2k_2},x,\tilde \beta_{-2k_2+2},\ldots,\tilde\beta_0,\ldots,\tilde \beta_{2k_1}),$$
	where $\tilde\beta_{i}=\beta_{-i}$ for each $i$.
	Using Theorem \ref{strong-trunc-Hamb-without-(-2k+1)}, the corollary follows.
\end{proof}

The following corollary is a consequence of Theorem \ref{strong-trunc-Hamb-without-(-2k+1)} and gives the solution of the bivariate TMP for the curve $x^2y=1$. 

\begin{corollary}
	\label{X2Y=-general}
	Let $\displaystyle\beta=(\beta_{i,j})_{i,j\in \ZZ^2_+,i+j\leq 2k}$ 
	be a 2--dimensional real multisequence of degree $2k$. 
	Suppose $\mc M(k)$ is positive semidefinite and recursively generated.
	Let 
	$$u^{(i)}:=(\beta_{0,i},\beta_{1,i})\quad\text{for }i=1,\ldots,2k-1,$$
	\begin{align*}
		\widehat{\beta} 
			&:=(u^{(2k-1)},u^{(2k-2)},\ldots,u^{(1)},\beta_{0,0},\beta_{1,0},\ldots,\beta_{2k-2,0}),\quad
				\widetilde{\beta}:=(\widehat \beta,\beta_{2k-1,0},\beta_{2k,0}),\\
		\overline{\beta}
			&:=(u^{(2k-2)},u^{(2k-3)},\ldots,u^{(1)},\beta_{0,0},\beta_{1,0},\ldots,\beta_{2k-2,0}),\quad
		\widebreve{\beta}:=(\overline \beta,\beta_{2k-1,0},\beta_{2k,0}),
	\end{align*}
	be subsequences of $\beta$,
	$$v:=\left\{\begin{array}{rl}
			\begin{mpmatrix} u^{(2k-1)} & u^{(2k-2)} & \cdots & u^{(\frac{k}{2}+1)} \end{mpmatrix},& \text{if }k\text{ is even},\\
			\begin{mpmatrix} u^{(2k-1)} & u^{(2k-2)} & \cdots & u^{(\lceil \frac{k}{2}\rceil+1)} & \beta_{0,\lceil \frac{k}{2}\rceil} \end{mpmatrix},& \text{if }k\text{ is odd},
		\end{array}\right.$$
 	a vector and
	$$\widetilde{A}:=\left(\begin{array}{cc} \beta_{0,2k} & v \\ v^T & A_{\overline{\beta}} \end{array}\right)$$
  a matrix.
  Then $\beta$ has a representing measure supported on the variety 
	$K:=\{(x,y)\in \RR^2\colon x^2y=1\}$ if and only if the following statements hold:
	\begin{enumerate}[(1)]
		\item One of the following holds: 
			\begin{itemize}
				\item $k\geq 3$ and $X^2Y=1$ is a column relation of $M(k)$. 
				\item $k=2$ and the equalities 
					$\beta_{2,1}=\beta_{0,0}$, $\beta_{3,1}=\beta_{1,0}$ hold.
				\item $k=1$.
			\end{itemize}
		\item\label{point1-2709-1322} One of the following holds: 
			\begin{enumerate}[(a)]
				\item $A_{\widetilde{\beta}}\succ 0$ and $\widetilde{A}\succ 0$.	
				\item\label{point2-23-12-08:44} $A_{\widetilde\beta}\succeq 0$ and 
					$\Rank A_{\widehat\beta}=\Rank A_{\widetilde \beta}=\Rank A_{\breve \beta}=\Rank M(k)$.
			\end{enumerate}
	\end{enumerate}

Moreover, let $r=\Rank M(k)$ and $\beta$ admits a $K$--representing measure. Let
$\gamma(x):=(\beta_{0,2k},x,\widetilde\beta)$ and $\widehat{A}:=\left(\begin{array}{cc} \beta_{0,2k} & u \\ u^T & A_{\breve\beta} \end{array}\right)$,
where 
		$$u:=\left\{\begin{array}{rl}
			\begin{mpmatrix} v & \beta_{0,\frac{k}{2}}\end{mpmatrix},& \text{if }k\text{ is even},\\
			\begin{mpmatrix} v & \beta_{1,\lceil \frac{k}{2}\rceil}\end{mpmatrix},& \text{if }k\text{ is odd},
		\end{array}\right.$$
and
	$$w:=\left\{\begin{array}{rl}
			\begin{mpmatrix} \beta_{1,2k-1} & u^{(2k-2)} & \cdots & u^{(\frac{k}{2})} \end{mpmatrix},& \text{if }k\text{ is even},\\
			\begin{mpmatrix} \beta_{1,2k-1} & u^{(2k-2)} & \cdots & u^{(\lceil \frac{k}{2}\rceil)} & \beta_{0,\lceil \frac{k}{2}\rceil-1} \end{mpmatrix},& \text{if }k\text{ is odd}.
		\end{array}\right.$$
Then:
\begin{enumerate}[(i)]
	\item If $r< 3k$, 
		then the representing measure is unique and of the form
		$\mu=\sum_{i=1}^{r}\rho_i\delta_{(x_i,x_i^{-2})},$ where $x_1,\ldots,x_r$ are the roots of the 
		generating polynomial of $\gamma(x_0)$,
		$x_0=u{(A_{\breve\beta})}^{\dagger} w^T$ and $\rho_1,\ldots,\rho_r>0$ are the corresponding densities.
	\item If $r=3k$, then there are two $(3k)$--atomic representing measures. Let
			$$x_{\pm}=u{(A_{\breve\beta})}^{\dagger} w^T\pm \sqrt{ \left(A_{\widetilde\beta} \big/ A_{\breve\beta}\right)\cdot\left(\widehat A \big/ A_{\breve\beta}\right)}.$$
		Then the two measures are of the form $\mu=\sum_{i=1}^{r}\rho_{i,\pm}\delta_{(x_{i,\pm},x_{i,\pm}^{-2})},$   
		where $x_{1,\pm},\ldots,x_{r,\pm}$ are the roots of the generating polynomial of $\gamma(x_{\pm})$,
		and $\rho_{1,\pm}$,$\ldots$,$\rho_{r,\pm}>0$ are the corresponding densities.
\end{enumerate}
\end{corollary}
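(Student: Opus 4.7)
The proof follows the same template as the proof of Corollary~\ref{posledica-12:51}, reducing the bivariate TMP for the curve $K:=\{(x,y)\in\RR^2\colon x^2y=1\}$ to the one-dimensional STHMP with a gap at $\beta_{-2k_1+1}$ solved in Theorem~\ref{strong-trunc-Hamb-without-(-2k+1)}. The first step is to establish a claim analogous to that in the proof of Corollary~\ref{posledica-12:51}: for atoms $x_1,\ldots,x_t\in\RR\setminus\{0\}$ and densities $\rho_1,\ldots,\rho_t>0$, the measure $\sum_{\ell=1}^t \rho_\ell \delta_{(x_\ell,x_\ell^{-2})}$ is a $K$-representing measure for $\beta^{(2k)}$ if and only if $\sum_{\ell=1}^t \rho_\ell \delta_{x_\ell}$ is a $(\RR\setminus\{0\})$-representing measure for the one-dimensional sequence defined by the collapse $\beta_m := \beta_{i,j}$ for any admissible $(i,j)$ with $i-2j=m$ and $i+j\leq 2k$. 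Well-definedness of the collapse is ensured by the column relation $X^2Y=1$ in $M(k)$ (which enforces $\beta_{i,j}=\beta_{i-2,j+1}$); hypothesis~(1) of the corollary absorbs the small cases $k=1,2$ in which the column relation either is vacuous or reduces to explicit equalities.

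A short index check determines which $m\in\{-4k,\ldots,2k\}$ are realized by some $(i,j)$ with $i,j\geq 0$, $i-2j=m$ and $i+j\leq 2k$: every $m$ in the range is realized except $m=-4k+1$. The collapsed 1D sequence therefore takes the form
\[
\beta(x)=(\beta_{-4k},\,x,\,\beta_{-4k+2},\,\beta_{-4k+3},\,\ldots,\,\beta_{2k-1},\,\beta_{2k}),
\]
matching the hypothesis of Theorem~\ref{strong-trunc-Hamb-without-(-2k+1)} with $k_1=2k$ and $k_2=k$. As in the proof of Corollary~\ref{posledica-12:51}, the Bayer--Teichmann theorem \cite{BT06} reduces the existence of any $K$-representing measure to a finitely atomic one, so the claim above combined with Theorem~\ref{strong-trunc-Hamb-without-(-2k+1)} yields: $\beta^{(2k)}$ admits a $K$-representing measure if and only if there exists $x_0\in\RR$ such that $\beta(x_0)$ is singular and recursively generated, if and only if the 1D matrix conditions listed in Theorem~\ref{strong-trunc-Hamb-without-(-2k+1)}(5) are satisfied.

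It then remains to translate the 1D conditions into the 2D statement of the corollary. Under $X^2Y=1$ together with recursive generation of $M(k)$, every entry $\beta_{i,j}$ depends only on $i-2j$, and the 1D Hankel matrices $A_{\widehat\beta}$, $A_{\widetilde\beta}$, $A_{\breve\beta}$ are, up to a reindexing of rows and columns, exactly $M(-2k+1,k-1)$, $M(-2k+1,k)$, $M(-2k+2,k)$; the matrix $\widehat{A}$ of the corollary coincides with $\widehat{A}$ of Theorem~\ref{strong-trunc-Hamb-without-(-2k+1)}. The parity split in the definitions of $v$, $u$, $w$ merely reflects how the ``middle'' 1D degrees are realized by 2D monomials: for $k$ even the cut lies on a pair $(\beta_{0,i},\beta_{1,i})$, while for $k$ odd it lies on a single representative $\beta_{0,\lceil k/2\rceil}$ or $\beta_{1,\lceil k/2\rceil}$. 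With these identifications, the positivity conditions and rank equalities in Theorem~\ref{strong-trunc-Hamb-without-(-2k+1)}(5) rewrite verbatim as those in (2) of the corollary, and the moreover part, including the formulas for $x_0$ and $x_{\pm}$ in terms of generalized Schur complements, reads off directly from the moreover part of Theorem~\ref{strong-trunc-Hamb-without-(-2k+1)}.

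The main obstacle I expect is the bookkeeping in the parity-dependent identifications between 2D monomials and 1D degrees, and in particular verifying the equality $\Rank A_{\widehat\beta}=\Rank A_{\widetilde\beta}=\Rank A_{\breve\beta}=\Rank M(k)$ under the assumption that $M(k)$ is psd, recursively generated, and has $X^2Y=1$ as a column relation. This amounts to showing that the column space of $M(k)$ is faithfully represented, monomial by monomial, by the reduced 1D moment data, after which everything else falls into place by direct invocation of Theorem~\ref{strong-trunc-Hamb-without-(-2k+1)}.
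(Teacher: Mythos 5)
Your proposal follows essentially the same route as the paper: collapse the bivariate data along the single index $m=i-2j$ (well defined because $X^2Y=1$ together with recursive generation forces $\beta_{i,j}$ to depend only on $i-2j$, with $m=-4k+1$ the only unrealized degree), transfer atomic measures between the curve $x^2y=1$ and $\RR\setminus\{0\}$, invoke Bayer--Teichmann, and apply Theorem \ref{strong-trunc-Hamb-without-(-2k+1)} with $k_1=2k$, $k_2=k$. The only slip is the displayed relation $\beta_{i,j}=\beta_{i-2,j+1}$, which should read $\beta_{i,j}=\beta_{i-2,j-1}$ (the shift that preserves $i-2j$); otherwise your identifications of $A_{\widehat\beta}$, $A_{\widetilde\beta}$, $A_{\breve\beta}$ with $M(-2k+1,k-1)$, $M(-2k+1,k)$, $M(-2k+2,k)$, and the remaining parity bookkeeping, coincide with the paper's argument.
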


\begin{proof}
	For $m\in \{-4k,-4k+2,-4k+3,\ldots,2k\}$ we define the numbers $\widetilde \beta_m$ by the following rule
		\begin{equation*}
			\widetilde \beta_m:=
			\left\{\begin{array}{rl}
				\beta_{0,\frac{|m|}{2}},& \text{if }m \text{ is even and } m<0,\\
				\beta_{1,\lceil\frac{|m|}{2}\rceil},& \text{if }m \text{ is odd and } m<0,\\
				\beta_{m,0},& \text{if }m\geq 0.
			\end{array}\right.
		\end{equation*}
	\noindent \textbf{Claim 1.} Every number $\widetilde \beta_m$ is well-defined.\\

	We have to prove that $i+j\leq 2k$, where $i,j$ are indices of $\beta_{i,j}$ used in the definition of 
	$\widetilde \beta_m$.
	We separate three cases according to $m$:
	\begin{itemize}
		\item $m$ is even and $m<0$: $\frac{|m|}{2}\leq \frac{4k}{2}=2k$.
		\item $m$ is odd and $m<0$: $\lceil\frac{|m|}{2}\rceil+1\leq \lceil\frac{4k-3}{2}\rceil+1=2k-1+1=2k.$
		\item $m$ is nonnegative: $m\leq 2k.$\\
	\end{itemize}
	
	\noindent \textbf{Claim 2.} 
		Let $t\in \NN$. 
		The atoms $(x_1,x_1^{-2}),\ldots (x_t,x_t^{-2})$ with densities $\rho_1,\ldots,\rho_t$
		are the $(x^2y-1)$--representing measure for $(\beta_{i,j})_{i,j\in \ZZ^2_+,i+j\leq 2k}$
		if and only if
		the atoms $x_1,\ldots,x_t$ with densities $\rho_1,\ldots,\rho_t$
		are the $(\RR\setminus\{0\})$--representing measure for 
		$\widetilde \beta(x)=(\widetilde \beta_{-4k},x,\widetilde \beta_{-4k+2},\widetilde \beta_{-4k+3}\ldots,\widetilde\beta_{2k})$.\\

	The if part follows from the following calculation:
	\begin{align*}
	\widetilde \beta_{m}
			&=\left\{\begin{array}{rl}
				\beta_{0,\frac{|m|}{2}},& \text{if }m \text{ is even and } m<0,\\
				\beta_{1,\lceil\frac{|m|}{2}\rceil},& \text{if }m \text{ is odd and } m<0,\\
				\beta_{m,0},& \text{if }m\geq 0,
				\end{array}\right.\\
			&=\left\{\begin{array}{rl}
				\sum_{\ell=1}^t \rho_\ell (x_{\ell}^{-2})^{\frac{|m|}{2}},&\text{if }m \text{ is even and } m<0,\\
				\sum_{\ell=1}^t \rho_\ell x_{\ell}(x_{\ell}^{-2})^{\lceil\frac{|m|}{2}\rceil},& 
					 \text{if }m \text{ is odd and } m<0,\\
				\sum_{\ell=1}^t \rho_\ell x_{\ell}^m,& 
					\text{if }m\geq 0,\\
				\end{array}\right.
			=\sum_{\ell=1}^t \rho_\ell x_\ell^{m},
	\end{align*}
	where $m=-4k,-4k+2,-4k+3,\ldots,2k$.

	The only if part follows from the following calculation:
	\begin{align*}
	  \beta_{i,j}
		&= \beta_{i-2,j-1}=\cdots=\left\{
		\begin{array}{rl}
			\beta_{i-2j,0},& \text{if }i-2j\geq 0,\\
			\beta_{i \Mod{2},j-\lfloor \frac{i}{2}\rfloor},& \text{if }i-2j<0,	
		\end{array}
		\right.
		=\widetilde \beta_{i-2j}\\
		&=\sum_{\ell=1}^t \rho_\ell x_\ell^{i-2j}=\sum_{\ell=1}^t \rho_\ell x_\ell^{i}(x_\ell^{-2})^j,
	\end{align*}
	where the first three equalities in the first line follow by $M(k)$ being rg.\\

	Using Claim 2 and a theorem of Bayer and Teichmann \cite{BT06}, implying that if $\beta$
	has a $K$--representing measure, then it has a finitely atomic $K$--representing measure, the statements of the 	
	corollary follows by Theorem \ref{strong-trunc-Hamb-without-(-2k+1)}. 
\end{proof}






\end{document}